\theoremstyle{plain}
\newtheorem{theorem}{Theorem}[section]
\newtheorem{corollary}[theorem]{Corollary}
\newtheorem{lemma}[theorem]{Lemma}
\newtheorem{proposition}[theorem]{Proposition}
\theoremstyle{definition}
\newtheorem{definition}[theorem]{Definition}
\theoremstyle{remark}
\newtheorem{remark}[theorem]{Remark}
\newtheorem{remarks}[theorem]{Remarks}
\newtheorem{example}[theorem]{Example}
\newcommand{\bbc}{\mathbb{C}}
\newcommand{\bbr}{\mathbb{R}}
\newcommand{\bbp}{\mathbb{P}}
\newcommand{\bbe}{\mathbb{E}}
\newcommand{\bbf}{\mathbb{F}}
\newcommand{\bbn}{\mathbb{N}}
\newcommand{\cb}{\mathcal{B}}
\newcommand{\cf}{\mathcal{F}}
\newcommand{\pfe}{\longmapsto}
\newcommand{\pf}{\longrightarrow}
\newcommand{\abs}[1]{\left| #1 \right|}
\newcommand{\norm}[1]{\left\| #1 \right\|}
\newcommand{\gdw}{\Leftrightarrow}
\begin{document}

\allowdisplaybreaks

\title{\bfseries A Classification of Deterministic Hunt Processes with Some Applications}

\author{%
    \textsc{Alexander Schnurr}%
    \thanks{Lehrstuhl IV, Fakult\"at f\"ur Mathematik, Technische Universit\"at Dortmund,
              D-44227 Dortmund, Germany,
              \texttt{alexander.schnurr@math.tu-dortmund.de}}
    }

\date{}

\maketitle
\begin{abstract}
Deterministic processes form an important building block of several classes of processes. We provide a method to classify deterministic Hunt processes. Within this framework we characterize different subclasses (e.g. Feller) and construct some (counter-)examples. In particular the existence of a Hunt semimartingale (on $\bbr$) which is not an It\^o process in the sense of Cinlar, Jacod, Protter and Sharpe (1980) is proven.
\end{abstract}

\emph{MSC 2010:} 60J25 (primary), 60J35, 47G30 (secondary)

\emph{Keywords:} Hunt semimartingale, deterministic process, It\^o process, Feller semigroup, semimartingale characteristics, symbol

\section{Introduction}

Hunt semimartingales and It\^o processes have been studied extensively some 25 to 30 years ago. Nowadays they return into the focus of research. This is in particular due to the fact that practitioners working in the direction of mathematical finance have recognized that reality is more complex then suggested by the Brownian or OU-models. Some authors prefer L\'evy processes (cf. \cite{lp}, Part V), others It\^o semimartingales (see e.g. \cite{veraart10}) or Feller processes (see e.g. \cite{bar-lev01}) in order to model financial data. All these classes have in common that they do have a deterministic part which sometimes plays an important r\^ole. It is this part we are dealing with in the present paper. Deterministic Markov processes are treated only seldom in the literature. This is in particular due to the following fact:
\begin{proposition}
Every deterministic process is a \emph{simple} Markov process.
\end{proposition}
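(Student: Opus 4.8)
The plan is to work directly from the defining feature of a deterministic process: started from a point $x$, the trajectory is almost surely a fixed, non-random path. First I would fix notation by writing the realisation started in $x$ as $t \mapsto \phi_t(x)$, so that under the law $\bbp^x$ one has $X_t = \phi_t(x)$ almost surely, with $\phi_0 = \mathrm{id}$. The one-dimensional distributions are then the Dirac kernels $P_t(x,\cdot) = \delta_{\phi_t(x)}$, and the entire analytic content of the process is encoded in the single family of maps $(\phi_t)_{t \ge 0}$. It therefore suffices to verify that this family yields a (time-homogeneous) transition semigroup and that the associated process satisfies the simple Markov property.

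The first, content-carrying step is to check that the $P_t$ really form a semigroup, i.e. that Chapman--Kolmogorov holds. For Dirac kernels this is equivalent to the flow identity $\phi_{s+t}(x) = \phi_s(\phi_t(x))$ for all $s,t \ge 0$ and all $x$, which is exactly the consistency already built into the notion of a time-homogeneous process: the trajectory started in $x$, observed from time $t$ onwards, must coincide with the trajectory started afresh in the state $\phi_t(x)$ reached at time $t$. Here I would also record the measurability of $(t,x) \mapsto \phi_t(x)$ needed for the $P_t$ to be genuine (sub-)Markov kernels; with the flow identity and measurability in hand, $P_{s+t} = P_s P_t$ follows by evaluating both sides against a bounded measurable $f$.

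The second step is the Markov property itself, and here determinism does all the work. Given the history $\cf_t$, and in particular the present state $X_t$, the future is not merely conditionally independent of the past but is a deterministic function of the present, namely $X_{t+s} = \phi_s(X_t)$ holds $\bbp^x$-almost surely for every starting point $x$ (the flow identity applied pathwise). Since $f(\phi_s(X_t))$ is already $\sigma(X_t)$-measurable, conditioning on the larger $\sigma$-field $\cf_t$ changes nothing, and for every bounded measurable $f$ one obtains
\begin{equation*}
\bbe^x\bigl[f(X_{t+s}) \mid \cf_t\bigr] = f\bigl(\phi_s(X_t)\bigr) = (P_s f)(X_t) = \bbe^{X_t}\bigl[f(X_s)\bigr],
\end{equation*}
which is precisely the (simple) Markov property, valid with respect to the natural filtration. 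I expect the only genuine obstacle to lie in the first step, namely pinning down the flow/Chapman--Kolmogorov identity together with the requisite measurability, since this is the point at which one must invoke that the object is a bona fide consistent process rather than an unrelated collection of trajectories. Once the semigroup structure is secured, the Markov property is a one-line consequence of the conditional law of the future given the present being a point mass; no path regularity or strong-Markov machinery is needed, which is exactly why the resulting process is only \emph{simple}.
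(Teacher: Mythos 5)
The paper offers no written proof of this proposition precisely because the intended argument is trivial, and your proposal misses that triviality by smuggling in a hypothesis that does not hold for a general deterministic process. The load-bearing step of your argument is the flow identity $\phi_{s+t}(x)=\phi_s(\phi_t(x))$, which you describe as ``the consistency already built into the notion of a time-homogeneous process.'' But a deterministic process in the sense of this paper is merely $X_t^x(\omega)=f(x,t)$ for some function $f$ with $f(x,0)=x$; nothing forces $f(x,s+t)=f(f(x,t),s)$. That identity is exactly the time-homogeneous Markov-family property \eqref{timehomone}, and the sentence immediately following the proposition states that this property does \emph{not} hold for every deterministic process --- that is the whole reason the word ``simple'' is emphasized. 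So your proof, read as written, establishes (or rather presupposes) the family version of the statement, which is false in general, while the proposition only claims the simple Markov property for each individual $(X,\bbp^x)$.

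The correct argument needs none of your machinery. Under $\bbp^x$ the entire path is almost surely the fixed function $t\mapsto f(x,t)$, so for any bounded measurable $g$ the random variable $g(X_{t+s})$ is $\bbp^x$-a.s.\ equal to the constant $g(f(x,t+s))$. A constant is measurable with respect to every sub-$\sigma$-algebra and equals its own conditional expectation, whence $\bbe^x[g(X_{t+s})\mid\cf_t]=g(f(x,t+s))=\bbe^x[g(X_{t+s})\mid \sigma(X_t)]$, which is the simple Markov property; no Chapman--Kolmogorov equation, no transition semigroup, and no measurability of $(t,x)\mapsto f(x,t)$ is required. Your closing remark that the process is ``only simple'' because no strong-Markov machinery is invoked also misreads the terminology: ``simple'' here contrasts a single process with a Markov \emph{family} satisfying \eqref{timehomone}, not the simple with the strong Markov property.
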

The word `simple' has to be emphasized here since this statement does not hold true for Markov families, which are sometimes called universal Markov processes (cf. \cite{bauerwt}, \cite{niels3}) or Markov processes in the sense of Blumenthal and Getoor (cf. \cite{blumenthalget}). Such a process $X=(X_t)_{t\geq 0}$ can start in every point of the respective state space and furthermore time homogeneity is present, i.e.
writing for $s,t\geq 0$, $x,z\in\bbr^d$ and a Borel set $B$ in $\bbr^d$ $P_{s,t}^x(z,B):=\bbp^x(X_{t}\in B | X_s=z)$ we have:
\begin{align} \label{timehomone}
P_{t-s}(z,B):=P_{s,t}^x(z,B)=P_{s+h,t+h}^y(z,B), \hspace{10mm} h\geq 0.
\end{align}
Since the `simple' processes are not of any interest in the given situation we will consider only families of processes. For the sake of readability we call the whole family $(X,\bbp^x)_{x\in\bbr^d}$ a \emph{stochastic process}. As it is always assumed that $\bbp^x(X_0=x)=1$ we write $X^x$ for the simple process $(X,\bbp^x)$. Since deterministic processes are adapted to every possible filtration, we will not write it down all the time but we assume that a fixed filtration $\bbf=(\cf_t)_{t\geq 0}$ as well as a $\sigma$-algebra $\cf$ on an arbitrary set $\Omega\neq \emptyset$ are always in the background.
The following example is the starting point of our considerations:
\begin{example} \label{ex:startingpoint}
Let $\Phi:\bbr\to\bbr$ be bijective, strictly monotonically increasing and such that $\Phi(0)=0$. In this case a Markov process is given by
\[
X_t^x(\omega):=\Phi(t+\Phi^{-1}(x)),\hspace{10mm} \text{ for every } \omega \in \Omega,
\]
i.e. by shifting the function $\Phi$ to the left and to the right. By inverting the function $x\mapsto \Phi(t+\Phi^{-1}(x))$ we know where a path being at time $t$ in $z\in\bbr$ has started at time zero:
\[
x=\Phi(\Phi^{-1}(z)-t)
\]
This gives us the homogeneous transition property \eqref{timehomone}: let $z,w\in\bbr$, $t,h\geq 0$ and $x\in\bbr$ such that $X_h^x=z$ then we obtain
\begin{align*}
P_{h,t+h}^x(z,\{w\}) = 1 &\gdw \Phi\Big((t+h)+\Phi^{-1}(x)\Big) = w \\
&\gdw \Phi\Big((t+h)+\Phi^{-1}(\Phi(\Phi^{-1}(z)-t))\Big) = w \\
&\gdw \Phi\Big(h+\Phi^{-1}(z) \Big)= w \\
&\gdw P_{0,t}^0(z,\{w\}) = 1.
\end{align*}
The function $\Phi$ will be called the `generating path' since it contains all the information of the process. Obviously the restriction $\Phi(0)=0$ is not needed and any shifted generating path $t\mapsto \Phi(t-s)$ ($s\in \bbr$) would have served as well.
\end{example}

In Section 2 we show that up to `dividing the state-space' every deterministic Hunt processes with state space $\bbr$ has the structure given above with a (countable) number of generating paths which define the process on disjoint intervals.

Since the definitions and notations for some of the classes of processes we are treating are not unified, let us first fix some terminology: a Markov process in the above sense, i.e. satisfying \eqref{timehomone}, is called \emph{Hunt process} if it is quasi-left continuous (cf. Definition I.2.25 of \cite{jacodshir}) with respect to every $\bbp^x$ $(x\in\bbr^d)$.
We restrict ourselves to Markov processes with right-continuous paths and associate a semigroup of operators with every such process: $(T_t)_{t\geq 0}$ on the bounded Borel measurable functions is given by
\[
T_t u(x) = \bbe^x u(X_t) = \int_\Omega u(X_t(\omega)) \, \bbp^x(d\omega)=\int_{\bbr^d} u(y) \, P_t(x,dy).
\]
We call $(T_t)_{t\geq 0}$ a \emph{Feller semigroup} and $(X_t)_{t\geq 0}$ a \emph{Feller process} if the following
conditions are satisfied: \\
\hspace*{5mm}$(F1)$ $T_t:C_\infty(\bbr^d) \to C_\infty(\bbr^d)$ for every $t\geq 0$, \\
\hspace*{5mm}$(F2)$ $\lim_{t\downarrow 0} \norm{T_tu-u}_\infty =0$ for every $u\in C_\infty(\bbr^d)$.\\
A Feller process is called \emph{rich} if the test functions $C_c^\infty(\bbr^d)$ are contained in the domain of its generator (cf. Definition \ref{def:generator}). Sometimes one encounters a different concept of Feller semigroups in the literature in which $C_\infty(\bbr^d)$ is replaced by the space $C_b(\bbr^d)$ equipped with local uniform convergence (cf. \cite{schilling98pos}). For the sake of clarity we will call such semi groups and the related processes \emph{$C_b$-Feller}.
We say that a process $(X,\bbp^x)_{x\in\bbr^d}$ is a \emph{semimartingale}, if every $X^x$ is one.
A Markov semimartingale is called \emph{It\^o process} (cf. \cite{vierleute}) if it has characteristics of the form:
\begin{align*}
B_t^{j}(\omega)&=\int_0^t \ell^{j}(X_s(\omega)) \ ds & j=1,...,d\\
C_t^{jk} (\omega)&=\int_0^t Q^{jk} (X_s(\omega)) \ ds &j,k=1,...,d\\
\nu(\omega;ds,dy)&=N(X_s(\omega),dy)\ ds
\end{align*}
where $\ell^{j},Q^{jk}:\bbr^d \pf \bbr$ are measurable functions, $Q(x)=(Q^{jk}(x))_{1\leq j,k \leq d}$ is a positive semidefinite matrix for every $x\in\bbr^d$, and $N(x,\cdot)$ is a Borel transition kernel on $\bbr^d \times \cb(\bbr^d \backslash \{0\})$.

In \cite{mydiss} it was shown that every rich Feller process is an It\^o process. Every Hunt semimartingale can be written as a random time change by results of \cite{cinlarjacod81}. The following diagram gives an overview on the interdependence of the classes of processes:
\begin{align*}
\begin{array}{ccccccccc}
   \text{L\'evy} & \subset  & \begin{array}{c}\text{(rich)} \\ \text{Feller} \end{array} & \subset & \text{It\^o}& \subset & \begin{array}{c}\text{Hunt} \\ \text{ semimartingale} \end{array}
         &\subset  &   \begin{array}{c}\text{Markov} \\ \text{ semimartingale} \end{array}   \\
                 & \rule[5mm]{0mm}{0mm} &    \cap & &&& \cap &&\cap\\
                 & \rule[5mm]{0mm}{0mm} & \text{Feller} & &\subset & &\text{Hunt}&\subset&\text{Markov}
\end{array}
\end{align*}

Let us give a brief outline on how the paper is organized: in Section 2 we will analyze deterministic Hunt processes. First we will deal with the behavior of single paths of the processes and afterwards with the dependence between these paths. These considerations lead to a result on the general structure of one-dimensional deterministic Hunt processes. As a byproduct we show that every such process is a semimartingale. Using our stuctural result we characterize the two kinds of Feller processes mentioned above and the property of being `rich'. Using these characterizations it is a comparably simple task to set up several examples and counterexamples in this context. In Section 4 we use the well known Cantor function to define a process which is a Hunt semimartingale and even a Feller process, but not an It\^o process. Further examples are considered in Section 5.

Most of the notation we are using is more or less standard. Note that we prefer to write $]s,t[$ for an open interval rather then $(s,t)$ and use the same convention for semi-open intervals. For the open ball of radius $r$ around $x\in\bbr^d$ we write $B_r(x)$. In the context of semimartingales we follow mainly \cite{jacodshir}.

\section{The Classification Theorem}

A stochastic process $(X,\bbp^x)_{x\in\bbr^d}=(X^x)_{x\in\bbr^d}$ is called \emph{deterministic} if it does not depend on $\omega$, i.e. there exists a function $f:\bbr^d \times [0,\infty[ \to \bbr^d$ such that
\[
X_t^x(\omega)=f(x,t)
\]
for every $\omega\in\Omega$. Let us first state two well-known results which we use as a starting point. The first one is taken from \cite{jacodshir} Proposition I.4.28:

\begin{proposition} \label{prop:detsemimg}
Let $f$ be a real-valued function on $[0,\infty[$. The (simple) process $X_t(\omega)=f(t)$ is a semimartingale iff $f$ is c\`adl\`ag and of finite variation on compact intervals.
\end{proposition}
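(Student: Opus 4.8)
The statement is an equivalence, so the plan is to treat the two implications separately; the direction ``finite variation $\Rightarrow$ semimartingale'' is immediate, while ``semimartingale $\Rightarrow$ finite variation'' carries all the weight.

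For the easy direction, assume $f$ is \cadlag and of finite variation on every compact interval. Then $X_t(\omega)=f(t)$ is a \cadlag, adapted process whose paths have finite variation on compacts, i.e. it belongs to the class $\cv$. Writing $X=X_0+0+(X-X_0)$ exhibits $X$ as the sum of a (vanishing) local martingale and a finite-variation process, so $X$ is a semimartingale. Here nothing beyond the definitions is needed, and the deterministic nature of the paths is irrelevant.

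For the converse, suppose $X_t=f(t)$ is a semimartingale. That the paths, hence $f$, are \cadlag is part of the definition of a semimartingale, so it remains to prove that $f$ has finite variation on each $[0,t]$. The plan is to exploit the good-integrator property of semimartingales: the family of elementary stochastic integrals $\int_0^t H\,dX$, taken over all simple predictable $H$ with $\norm{H}_\infty\leq 1$, is bounded in $\bbp$-probability. For a partition $0=t_0<t_1<\dots<t_n=t$ set $h_i:=\operatorname{sgn}\big(f(t_{i+1})-f(t_i)\big)$ and $H_s:=\sum_{i}h_i\,\mathds{1}_{]t_i,t_{i+1}]}(s)$. Since the $h_i$ are (deterministic) constants they are $\cf_{t_i}$-measurable, so $H$ is a simple predictable integrand with $\norm{H}_\infty\leq 1$, and one computes $\int_0^t H\,dX=\sum_i h_i\big(f(t_{i+1})-f(t_i)\big)=\sum_i\abs{f(t_{i+1})-f(t_i)}$, the variation of $f$ along the partition.

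The decisive observation is that each such integral is a deterministic constant, because both $X$ and $H$ are deterministic. A family of constants that is bounded in probability is uniformly bounded by some $K_t<\infty$; hence the variation sums of $f$ over $[0,t]$ are all dominated by $K_t$, which is exactly finite variation on $[0,t]$. I expect this step to be the crux: everything hinges on having the good-integrator characterization available, and on the elementary but essential remark that for deterministic integrands the stochastic integral degenerates to the ordinary variation sum. An alternative route is to use the canonical decomposition $X=X_0+M+A$ together with the fact that a deterministic \cadlag local martingale is constant; this works as well, but the localization and integrability bookkeeping make it less transparent than the integrand argument above.
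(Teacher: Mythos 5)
Your argument is correct, but note that the paper does not prove this statement at all: it is quoted verbatim from Jacod--Shiryaev, Proposition I.4.28, and the citation is the entire ``proof''. So the comparison is really between your route and the textbook one. Your forward direction is the standard triviality. For the converse you invoke the good-integrator property (boundedness in probability of the elementary integrals $\int_0^t H\,dX$ over simple predictable $\norm{H}_\infty\leq 1$), choose $H$ built from the signs of the increments of $f$, and observe that the resulting integral is the deterministic variation sum, whence bounded-in-probability collapses to uniformly bounded. This is sound, and the observation that a family of constants bounded in probability is bounded in modulus is exactly the right degeneration; the one thing to be upfront about is that ``every classical semimartingale is a good integrator'' is itself a nontrivial theorem (the easy half of Bichteler--Dellacherie, but still resting on the fundamental theorem of local martingales), so your proof is short only modulo that input. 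The alternative you sketch and set aside is actually closer to what one does with the Jacod--Shiryaev definitions: by Stricker's theorem $X$ is a semimartingale for the trivial filtration, with respect to which every local martingale is deterministic and hence constant, so the decomposition $X=X_0+M+A$ degenerates to $X=X_0+A$ and finite variation follows; that route avoids the integrator machinery at the cost of Stricker's theorem. Either way the statement is genuinely a citation in the paper, and your proof fills a gap the author deliberately left to the literature.
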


\begin{proposition}
A deterministic process $X^x$ is a L\'evy process iff it can be written as $x+a\cdot t$ with $a\in\bbr^d$.
A one-dimensional deterministic process $X^x$ is a subordinator iff it can be written as $x+a\cdot t$ with $a\geq 0$.
A L\'evy process $X^x$ is deterministic iff it can be written as $x+a\cdot t$ with $a\in\bbr^d$.
\end{proposition}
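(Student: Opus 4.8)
The plan is to prove the one substantive fact shared by all three assertions — that a process which is simultaneously Lévy and deterministic must be the affine drift $x+a\cdot t$ — and to dispatch the remaining implications by routine verification. Note that the first and third statements are logically identical (both assert that ``Lévy and deterministic'' is equivalent to the form $x+a\cdot t$, read from the two opposite starting points), while the subordinator claim is the one-dimensional specialization together with a sign condition; so I would treat them as a single argument.

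First I would exploit the spatial homogeneity of Lévy processes. Writing $X_t^x(\omega)=f(x,t)$, the Lévy property gives $X_t^x = x + X_t^0$, so it suffices to analyze $g(t):=f(0,t)$, a deterministic function with $g(0)=0$. Here it is worth observing that independence of increments is \emph{automatic} for a deterministic process, since a constant is independent of every $\sigma$-algebra; consequently that axiom imposes no constraint whatsoever, and the entire content is forced by stationarity of the increments.

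Stationarity then yields, for all $s,t\geq 0$, the identity
\[
g(s+t)-g(s)=g(t),
\]
so that $g$ satisfies Cauchy's functional equation $g(s+t)=g(s)+g(t)$. The decisive regularity input is that every Lévy process is \cadlag, hence $g$ is in particular right-continuous; a measurable solution of Cauchy's equation is necessarily linear, and right-continuity certainly guarantees measurability, so $g(t)=a\cdot t$ for some $a\in\bbr^d$. This gives $X_t^x=x+a\cdot t$, as required.

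The main obstacle — and essentially the only place demanding care — is precisely this passage from Cauchy's equation to linearity, because without a regularity hypothesis the equation admits pathological non-measurable solutions; invoking the \cadlag\ property of Lévy paths is exactly what excludes them. For the converse directions I would simply check that $x+a\cdot t$ is deterministic and has stationary, trivially independent increments with \cadlag\ paths, hence is Lévy. Finally, for the subordinator statement I would combine the affine form with the defining monotonicity: a subordinator is a non-decreasing one-dimensional Lévy process, and $t\mapsto x+a\cdot t$ is non-decreasing if and only if $a\geq 0$.
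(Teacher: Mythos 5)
Your proof is correct. Note, however, that the paper offers no proof of this proposition at all: it is stated as one of ``two well-known results which we use as a starting point,'' with \cite{sato} given as the standard reference, so there is nothing to compare against line by line. Your reduction is the natural one --- all three equivalences collapse to the single claim that deterministic plus L\'evy forces the affine form, independence of increments is vacuous for constants, and stationarity of (Dirac) increment laws turns into the pointwise Cauchy equation $g(s+t)=g(s)+g(t)$ for $g(t)=X^x_t-x$. One small remark: you do not need the full theorem that measurable additive functions are linear. Additivity alone gives $g(q)=q\,g(1)$ for all nonnegative rationals $q$, and right-continuity of the path then extends this to $g(t)=t\,g(1)$ for all $t\geq 0$ by approximating $t$ from the right by rationals; this is slightly more elementary and avoids any appeal to measurability. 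The converse verifications and the sign condition $a\geq 0$ for the subordinator case are exactly as routine as you indicate.
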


Our standard reference for results on L\'evy processes and subordinators is \cite{sato}. For a deterministic Markov process the time homogeneity \eqref{timehomone} reads as follows: if there exists $s,t\geq 0$ and $x,y\in\bbr^d$ such that $X_s^x=X_t^y$ we obtain
\begin{align} \label{timehom}
  X_{s+h}^x = X_{t+h}^y
\end{align}
for $h\geq 0$. In the sequel we will first deal with the behavior of a single path $t\mapsto X_t^x$. Directly from \eqref{timehom} we obtain the following.

\begin{proposition} Let $(X^x)_{x\in\bbr^d}$ be a deterministic Markov process and let $x\in\bbr^d$. If there exists $t_0<t_1$ such that $X_{t_0}^x=X_{t_1}^x$ then $X_{t_0+h}^x=X_{t_1+h}^x$ for every $h\geq 0$.
\end{proposition}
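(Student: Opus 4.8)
The plan is to derive the statement directly from the time-homogeneity relation \eqref{timehom}, of which it is merely the special case $y=x$. Recall that \eqref{timehom} asserts, for any deterministic Markov process, the following: whenever there exist $s,t\geq 0$ and $x,y\in\bbr^d$ with $X_s^x=X_t^y$, one has $X_{s+h}^x=X_{t+h}^y$ for all $h\geq 0$. Because the process is deterministic, the equality $X_s^x=X_t^y$ is a genuine equality of points in $\bbr^d$ (no almost-sure qualification is needed), so both the hypothesis and the conclusion can be read off pointwise.

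Concretely, I would set $s:=t_0$, $t:=t_1$, and $y:=x$. The assumption $X_{t_0}^x=X_{t_1}^x$ is then exactly the premise $X_s^x=X_t^y$ appearing in \eqref{timehom}, and applying \eqref{timehom} immediately yields $X_{t_0+h}^x=X_{t_1+h}^x$ for every $h\geq 0$, which is the assertion. Interpretively, this says that the single path $t\mapsto X_t^x$ is, from time $t_0$ onwards, periodic with period $t_1-t_0$; the proposition thus records the first basic consequence of time homogeneity for the behavior of an individual path.

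There is essentially no obstacle. The only point requiring a moment's care is that one invokes \eqref{timehom} with the two starting points identified ($x=y$) rather than distinct, which is legitimate since \eqref{timehom} is stated for arbitrary $x,y\in\bbr^d$. This is precisely why the text can announce the result as following ``directly'' from the time homogeneity, and I would present the proof in a single line accordingly.
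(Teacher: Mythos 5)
Your proposal is correct and coincides with the paper's intent: the paper gives no separate proof, stating only that the proposition follows ``directly from \eqref{timehom}'', and your specialization $s:=t_0$, $t:=t_1$, $y:=x$ is exactly that one-line argument.
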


\begin{remark} This means that if a path returns to a point, it has visited before, it becomes periodic. Obviously there exists a smallest pair $t_0,t_1$ meeting the requirements of the above proposition. In this case one could speak of a pre-periodic phase up to time $t_0$ and afterwards of periods of length $t_1-t_0$. This proposition as well as the next one remain true for a general state space.
\end{remark}

\begin{proposition} \label{prop:locallyconstant}
If a path of the deterministic Markov process $(X^x)_{x\in\bbr^d}$ becomes locally constant, it remains constant forever, i.e.
if $t\mapsto X_t^x$ is constant on $[t_0,t_1]$ (for some $t_0<t_1$) then $X_t^x=X_{t_0}^x$ for every $t>t_0$.
\end{proposition}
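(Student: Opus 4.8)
The plan is to derive this statement as a near-immediate consequence of the preceding proposition on periodicity. Write $c := X_{t_0}^x$ and $p := t_1 - t_0 > 0$. The first step is the observation that constancy of $t \mapsto X_t^x$ on $[t_0,t_1]$ forces in particular $X_{t_0}^x = X_{t_1}^x = c$, so the hypothesis of the previous proposition is met with the pair $t_0 < t_1$. Hence we obtain $X_{t_0+h}^x = X_{t_1+h}^x$ for every $h \geq 0$; in other words, the single path becomes periodic with period $p$ once we pass time $t_0$.

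Second, I would package this periodicity in the auxiliary map $g(h) := X_{t_0+h}^x$, defined for $h \geq 0$. The conclusion of the previous proposition then reads $g(h) = g(h+p)$ for all $h \geq 0$. On the other hand, the assumed constancy on $[t_0,t_1]$ says precisely that $g(h) = c$ for all $h \in [0,p]$; that is, $g$ already equals $c$ on one full period, namely a closed interval of length exactly $p$.

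The final step is a reduction modulo the period. Any $h \geq 0$ can be written as $h = kp + r$ with an integer $k \geq 0$ and $r \in [0,p[$, and applying the relation $g(h+p) = g(h)$ repeatedly downwards gives $g(h) = g(r) = c$, since $r$ lies in the interval where $g$ is already known to equal $c$. Translating back, $X_t^x = c = X_{t_0}^x$ for every $t \geq t_0$, which is the claim.

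I do not expect a genuine obstacle here: the statement is essentially a corollary of the previous proposition, and the only thing to get right is the bookkeeping, namely checking that the interval $[t_0,t_1]$ on which the path is constant covers an \emph{entire} period $p = t_1 - t_0$, so that periodicity propagates the value $c$ to all later times. In particular the argument is purely algebraic in the time-homogeneity relation \eqref{timehom} and requires no continuity or other regularity of the path, so it remains valid over a general state space exactly as the remark following the previous proposition predicts.
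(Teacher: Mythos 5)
Your proof is correct, and it takes a somewhat different route from the paper's. The paper argues at the level of transition kernels: local constancy on an interval of length $h$ gives $P_t(z,\{z\})=1$ for $t\leq h$ (with $z$ the constant value), then $t=nh+\varepsilon$ is decomposed and the Chapman--Kolmogorov equation is iterated to propagate the point mass to all times. You instead argue purely pathwise, deducing from the preceding proposition that the path is periodic with period $p=t_1-t_0$ and observing that the interval of constancy covers one full period, so reduction of $t-t_0$ modulo $p$ finishes the argument. The underlying arithmetic (division with remainder by the period and iteration) is the same in both proofs, but your version avoids any integration over the kernels, makes the dependence on the preceding proposition explicit, and makes it transparent why the statement holds for a general state space --- which is exactly what the remark between the two propositions asserts without proof. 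One cosmetic point: the statement asks for $t>t_0$ and you prove the (slightly stronger, equally valid) claim for all $t\geq t_0$; also note that the paper's phrasing $P_t(x,\{x\})$ implicitly shifts the time origin to $t_0$ via time homogeneity, a step your formulation handles explicitly through the map $g$.
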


\begin{proof}
If the process is locally constant, there exists an $h>0$ such that $P_t(x,\{x\})=1$ for $t\leq h$. For every $t \geq 0$ there exist $n\in\bbn$ and $0\leq \varepsilon \leq h$ such that $t = n\cdot h + \varepsilon$. We obtain by the Chapman-Kolmogorov equation (see e.g. \cite{ethierkurtz} Formula (4.1.10))
\begin{align*}
  P_t(x,\{x\})=\int_{\bbr^d} ... \int_{\bbr^d} P_\varepsilon(y_n,{x}) P_h(y_{n-1},dy_n)\ ... \  P_h(x,dy_1)=1.
\end{align*}
\end{proof}

The following result is an immediate consequence of the definition of quasi-left continuity:

\begin{proposition}
A deterministic Markov process is Hunt iff its paths are continuous.
\end{proposition}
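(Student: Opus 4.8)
The statement to prove is:

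\emph{A deterministic Markov process is Hunt iff its paths are continuous.}

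We are working with the definition given in the excerpt: a Markov process is Hunt if it is quasi-left continuous with respect to every $\bbp^x$, and we have already restricted to right-continuous (c\`adl\`ag) paths. For a deterministic process, $X_t^x(\omega)=f(x,t)$, so each path $t\mapsto X_t^x$ is a deterministic c\`adl\`ag function and the only randomness-free content of quasi-left continuity concerns whether these deterministic paths can jump at a predictable (here: deterministic) time.

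\textbf{Proof proposal.}

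The plan is to unwind the definition of quasi-left continuity in the deterministic setting and observe that it collapses to a statement about the jumps of the individual paths. Recall (Definition I.2.25 of \cite{jacodshir}) that quasi-left continuity means that for every $\bbp^x$ and every increasing sequence of stopping times $T_n \uparrow T$ one has $X_{T_n} \to X_T$ $\bbp^x$-almost surely on $\{T<\infty\}$. First I would note the easy direction: if every path $t\mapsto X_t^x$ is continuous, then $X_{T_n}(\omega)=f(x,T_n(\omega))\to f(x,T(\omega))=X_T(\omega)$ for every $\omega$ simply by continuity of $f(x,\cdot)$, so quasi-left continuity holds trivially with respect to each $\bbp^x$, and the process is Hunt.

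For the converse I would argue by contraposition. Suppose some path $t\mapsto X_t^x=f(x,t)$ is not continuous; since by assumption the paths are c\`adl\`ag, there is a time $t_0>0$ at which the path has a genuine jump, i.e. the left limit $f(x,t_0-)$ exists but differs from $f(x,t_0)$. The key point is that in a deterministic process every time is a deterministic (constant) stopping time, so I can exhibit an explicit announcing sequence of such times that violates quasi-left continuity. Concretely, take the constant stopping times $T_n\equiv t_0-1/n$ and $T\equiv t_0$; then $T_n\uparrow T$ strictly, $T$ is totally inaccessible-free (it is a constant time), and $X_{T_n}=f(x,t_0-1/n)\to f(x,t_0-)\neq f(x,t_0)=X_T$ for every $\omega$. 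Hence $\bbp^x(X_{T_n}\to X_T)=0$, so the process fails to be quasi-left continuous with respect to $\bbp^x$ and is therefore not Hunt.

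The main obstacle is being precise about the quantifier structure and the role of the constant stopping times: one must check that the sequence $T_n\equiv t_0-1/n$ genuinely announces the deterministic time $T\equiv t_0$ (so that the convergence $X_{T_n}\to X_T$ is the one required by quasi-left continuity) and that a jump of a c\`adl\`ag path is necessarily approached from the left by the left limit, which is exactly what guarantees $X_{T_n}\to f(x,t_0-)\neq X_T$. Once these two observations are in place, the equivalence follows immediately, and one should remark that the argument uses only the c\`adl\`ag assumption already standing in the background, so the proposition holds on a general state space as well.
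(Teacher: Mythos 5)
Your proof is correct and follows exactly the route the paper has in mind: the paper states this proposition without proof, calling it ``an immediate consequence of the definition of quasi-left continuity,'' and your argument is precisely the natural unwinding of that definition (continuity gives quasi-left continuity along any announcing sequence, while a c\`adl\`ag jump at a deterministic time $t_0>0$ is detected by the constant stopping times $T_n\equiv t_0-1/n\uparrow t_0$, violating quasi-left continuity under $\bbp^x$). The only cosmetic quibble is the phrase ``totally inaccessible-free''; what you mean is simply that a constant time is predictable, which is what makes it an admissible witness.
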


Now we take the order structure into account and therefore restrict ourselves to Markov processes on $\bbr$.

\begin{theorem} \label{thm:types}
Let $X$ be a one-dimensional deterministic Hunt process. For every path $t\mapsto X_t^x$ there exists a $t_0\in [0,\infty]$ such that $t\mapsto X_t^x$ is strictly monotonically (increasing or decreasing) on $[0,t_0[$ and constant on $[t_0,\infty[$.
\end{theorem}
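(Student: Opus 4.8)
The plan is to first record that, since $X$ is Hunt, the path $g(t):=X_t^x$ is continuous (by the proposition characterizing deterministic Hunt processes), and then to separate the path into an initial strictly monotone phase and a terminal constant phase. The engine throughout is the observation that any coincidence of values forces periodicity: if $g(s)=g(t)$ for some $s<t$, then by the proposition on returning paths $g(s+h)=g(t+h)$ for all $h\geq 0$, so $g$ has period $t-s$ on $[s,\infty[$.

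The key step is to show that a non-constant continuous path cannot be periodic on a half-line. Suppose $g$ is periodic with some period $p>0$ on $[t_1,\infty[$ and is not constant there. On the compact interval $[t_1,t_1+p]$ the continuous function $g$ attains a maximum $M$ strictly above its minimum. If this maximum were attained on a non-degenerate subinterval, $g$ would be locally constant and hence, by Proposition \ref{prop:locallyconstant}, constant on a half-line, contradicting $M>\min g$; so the maximum is attained at isolated points, and I may pick such a point $t^\ast$ with $t^\ast-t_1$ larger than $p$. For each small $\delta>0$ the intermediate value theorem yields $a_\delta<t^\ast<b_\delta$ in $[t_1,\infty[$ with $g(a_\delta)=g(b_\delta)=M-\delta$ and $b_\delta-a_\delta\to 0$ as $\delta\to 0$. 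Each pair produces, via the periodicity proposition, a period $b_\delta-a_\delta$ of $g$ valid on $[t^\ast,\infty[$. A continuous function on a half-line admitting arbitrarily small positive periods is constant there, so $g$ is constant on $[t^\ast,\infty[$; Proposition \ref{prop:locallyconstant} then makes $g$ constant, contradicting $M>\min g$. Hence a non-constant continuous path is never periodic on a half-line.

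With this in hand I would finish as follows. Set $t_0:=\inf\{t\geq 0 : g \text{ is constant on }[t,\infty[\}$, with the convention $\inf\emptyset=\infty$. If $g$ fails to be strictly monotone on all of $[0,\infty[$, then (a continuous injective function on an interval being strictly monotone) there are $s<t$ with $g(s)=g(t)$, whence $g$ is periodic on $[s,\infty[$ and, by the previous paragraph, constant on a half-line; thus $t_0<\infty$, and continuity extends constancy down to the infimum, so $g$ is constant on $[t_0,\infty[$. It remains to verify strict monotonicity on $[0,t_0[$. If $g$ were not strictly monotone there, the same argument gives $s_1<s_2<t_0$ with $g(s_1)=g(s_2)$, so $g$ is periodic with period $s_2-s_1$ on $[s_1,\infty[$; iterating this period forward until the argument exceeds $t_0$ and using constancy on $[t_0,\infty[$ forces $g$ to equal the terminal constant already on $[s_1,\infty[$, contradicting the minimality of $t_0$. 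Therefore $g$ is strictly monotone on $[0,t_0[$ and constant on $[t_0,\infty[$, as claimed.

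I expect the main obstacle to be the second paragraph: ruling out non-constant periodic behavior. The delicate point is manufacturing arbitrarily small periods near an extremum and ensuring they all act on a common half-line, so that continuity can collapse the path to a constant; everything else is bookkeeping with the intermediate value theorem and the two structural propositions already established.
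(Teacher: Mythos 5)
Your argument is correct in substance but takes a genuinely different route from the paper's. The paper argues directly with the transition function: assuming the path has a ``hump'' ($X_s^x=X_u^x=m$ and $X_t^x>m$ for some $s\leq t\leq u$), it takes the \emph{last} time $t_{max}$ in $[s,u]$ at which the maximum $M$ is attained, uses the intermediate value theorem to produce a level $m_1$ just below $M$ that is hit both shortly before and shortly after $t_{max}$, and observes that from $m_1$ the process reaches $M$ after the elapsed time $t_{max}-t_1$ in the first case but not in the second --- a direct contradiction to time homogeneity \eqref{timehomone}. You instead channel the Markov property entirely through the periodicity proposition ($g(s)=g(t)$ forces period $t-s$ on $[s,\infty[$) and finish with pure real analysis: arbitrarily small periods manufactured near a maximizer collapse the path to a constant. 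Your global organization (a reusable lemma ``no non-constant continuous path is periodic on a half-line'', then the injective-iff-strictly-monotone fact to define $t_0$ as an infimum) is more modular than the paper's single reductio, at the cost of an extra real-analysis fact; the underlying geometric idea (level sets on both sides of a maximizer) is the same in both proofs.

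One step needs repair. From ``the set of maximizers contains no nondegenerate interval'' you conclude that the maximum ``is attained at isolated points''. That inference is false: a closed set with empty interior can be perfect (Cantor-like), so an isolated maximizer need not exist, and your claim $b_\delta-a_\delta\to 0$ is not yet justified. Fortunately isolatedness is not needed, only the convergence. Take any maximizer $t^\ast$ preceded by a full period, and let $a_\delta$ be the \emph{last} time before $t^\ast$ and $b_\delta$ the \emph{first} time after $t^\ast$ at which the level $M-\delta$ is attained; then $g>M-\delta$ on $]a_\delta,b_\delta[$, so if $a_\delta\not\to t^\ast$ or $b_\delta\not\to t^\ast$ one obtains $g\equiv M$ on a nondegenerate interval adjacent to $t^\ast$, whence Proposition \ref{prop:locallyconstant} makes $g$ eventually constant, contradicting that the minimum $m<M$ is attained after $t^\ast$ by periodicity. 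With that patch your proof is complete.
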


\begin{remark}
Since $t_0\in [0,\infty]$ the `pure types' of paths which are only constant OR strictly monotone are included.
\end{remark}

\begin{proof}
Let $t\mapsto X_t^x$ be a path which is not of the type described above. In this case there exist $s\leq t\leq u$ such that $X_s^x=X_u^x=:m$ and (w.l.o.g.) $X_t^x > X_s^x$. Let $M$ be the maximum value of the continuous path restricted to the compact set $[s,u]$. Furthermore let $t_{max}$ be the maximum of the set
\[
\{t\in [s,u] : X_t=M \}.
\]
This maximum is attained since the paths are left continuous. Furthermore there exists an $\varepsilon > 0$ such that $B_\varepsilon(t_{max}) \subseteq ]s,u[$. There exists a $t_0\in]t_{max}-\varepsilon/2,t_{max}[$ where a value $m_0\in ]m,M[$ is attained by $t\mapsto X_t^x$. Otherwise the path would be locally constant which leads to a contradiction by Proposition \ref{prop:locallyconstant}. By the intermediate value theorem every point in $[m_0,M]$ is attained in $[t_0,t_{max}]$. In particular an arbitrary value $m_1$ which is attained (again by the intermediate value theorem) in $t_2\in]t_{max},t_{max}+\varepsilon/2[$ is as well attained in $t_1\in ]t_0,t_{max}[\,\subseteq \, ]t_{max}-\varepsilon/2,t_{max}[$. By the definition of $M$ we obtain
\[
P_{t_1,t_{max}}(m_1,\{M\})=1 \neq 0 =P_{t_2,t_2+(t_{max}-t_1)}(m_1,\{M\})
\]
which is a contradiction to time homogeneity.
\end{proof}

In the non-deterministic world there exist Hunt processes which are not semimartingales: Let $W$ be a standard Brownian motion. The process $X:=\abs{W}^{1/2}$ is Hunt without being a semimartingale (see \cite{yor}). However, in the deterministic setting we have the subsequent result which follows directly from Theorem \ref{thm:types}.

\begin{corollary} \label{cor:huntsemimg}
Every one-dimensional deterministic Hunt process is a semimartingale.
\end{corollary}

This corollary does not hold true for general deterministic Markov processes as the following example illustrates:

\begin{example} Let
\begin{align*}
X_t^0(\omega):= \begin{cases} t &\text{if } t\in \bigcup_{n=0}^\infty \left[1-\frac{1}{2^{2n}},1-\frac{1}{2^{2n+1}} \right[\\
-t & \text{if } t\in \bigcup_{n=0}^\infty \left[1-\frac{1}{2^{2n+1}},1-\frac{1}{2^{2n+2}} \right[ \\
0  & \text{if } t\in [1,\infty[
\end{cases}
\end{align*}
This paths looks as follows:

\centerline{
\setlength{\unitlength}{1cm}
\begin{picture}(1,2)
\put(0,0){\vector(0,1){2}}
\put(0,1){\vector(1,0){2.1}}
\linethickness{0.5mm}
\put(0,1){\line(1,1){0.5}}
\put(0.5,0.5){\line(1,-1){0.25}}
\put(0.75,1.75){\line(1,1){0.125}}
\put(0.875,0.125){\line(1,-1){0.0625}}
\put(0.9375,1.9375){\line(1,1){0.03125}}
\put(0.96875,0.03125){\line(1,-1){0.015625}}
\put(1,1){\line(1,0){1}}
\put(1,0.8){1}
\put(2,0.8){2}
\put(-0.2,1.8){1}
\end{picture}
}
Since the path starting in zero crosses the interval $]-1/2,1/2[$ infinitely often on $[1/2,1]$ it is not of finite variation on compacts and the left-hand side limit in 1 does not exist. By Proposition \ref{prop:detsemimg} the process is not a semimartingale. For the other starting points we define the process as follows: if there exists a $t_x\geq 0$ such that $X_{t_x}^0 = x$,
we set $X^x_t:=X_{t_x+t}^0$. If there is no such $t_x$ we set $X^x_t:=x$ for every $t\geq 0$. This gives us a Markov process. Using a slightly different construction one can show that even a deterministic \emph{c\`adl\`ag} Markov process need not be a semimartingale (cf. Example \ref{ex:cadlagmp}).
\end{example}

The statement of Corollary \ref{cor:huntsemimg} does not hold for dimension $d\geq 2$ as it is shown in example \ref{ex:multidim} below. From now on we restrict ourselves to one-dimensional Hunt processes. Next we analyze the interdependence between the paths of such a process. Let us start with some elementary facts: if two paths hit each other, i.e. $X_{t_0}^x=X_{t_0}^y$ then they `stick together':
\[
X_t^x=X_t^y \text{ for every } t\geq t_0.
\]
This is due to \eqref{timehom}.

By Theorem \ref{thm:types} two paths can only hit each other at time $t_0$ if one is strictly increasing up to $t_i\leq t_0$ and constant on $[t_0,\infty[$ and the other one is strictly decreasing up to $t_d\leq t_0$ and afterwards constant (and $t_i = t_0$ or $t_d = t_0$).

\centerline{
\setlength{\unitlength}{1cm}
\begin{picture}(1,2)
\put(0,0){\vector(0,1){2}}
\put(0,1){\vector(1,0){2.1}}
\linethickness{0.5mm}
\put(1,1){\line(1,0){1}}
\put(0,0){\line(1,1){1}}
\put(0,1.5){\line(2,-1){1}}
\put(1,0.6){$t_0$}
\end{picture}
}

Let us start with an $x_0\in\bbr$ such that $X^{x_0}$ is increasing (at least for an initial period of time). There are three possibilities how the path can behave:
case a: it grows up to infinity (if it does so in finite time, we have a killing).
case b: it is everywhere strictly monotone, i.e. $t_0=\infty$ in Theorem \ref{thm:types}, but it is bounded and its $\limsup$ is $y$.
case c: it is strictly monotone up to time $t_0$ and afterwards it is constantly equal to $y$.
In the respective cases we know the behavior of the paths starting in $[x_0,\infty[$, $[x_0,y[$ or $[x_0,y]$ by formula \eqref{timehom}.
Since the process is increasing (for an initial time period) on the intervals $[x_0,\infty[$, $[x_0,y[$ resp. $[x_0,y[$ we call them a $\oplus$-domain. An interval on which the paths are decreasing (for an initial time period) is called $\ominus$-domain and a (possible degenerate) interval on which the paths are constant is called $\odot$-domain. In the case c above $\{y\}$ already belongs to a $\odot$-domain above the first interval (see below).

The $x_0$ was chosen arbitrarily. Therefore we should now examine what happens for $x<x_0$. Two things can happen: either there exists an $h>0$ such that
\[
X_{t+h}^x = X_t^{x_0}
\]
(in this case we have $X_{t+h+s}^x = X_{t+s}^{x_0}$ by formula \eqref{timehom} for every $s\geq 0$) or not. Now we set:
\begin{align} \label{inf}
  z:=\inf \{x\leq x_0 : \text{ there exists an } h>0 \text{ such that } X_{t+h}^x = X_t^{x_0} \}
\end{align}
$z$ could be $-\infty$. Otherwise we have again three possibilities:
\[
\left.
\begin{array}{l}
\text{case 1: for $x=z$ there still exists such an } h>0 \text{ and we are still in the } \oplus \text{-domain.}\\
\text{case 2: the point belongs to a } \odot \text{-domain.}\\
\text{case 3: the point belongs to a } \ominus\text{-domain.}
\end{array}
\right\} (\star)
\]
In any case the $\oplus$-domain ends here and below we have either a $\odot$- or a $\ominus$-domain or another $\oplus$-domain.

\begin{center}
\includegraphics[width=30mm, angle=-90]{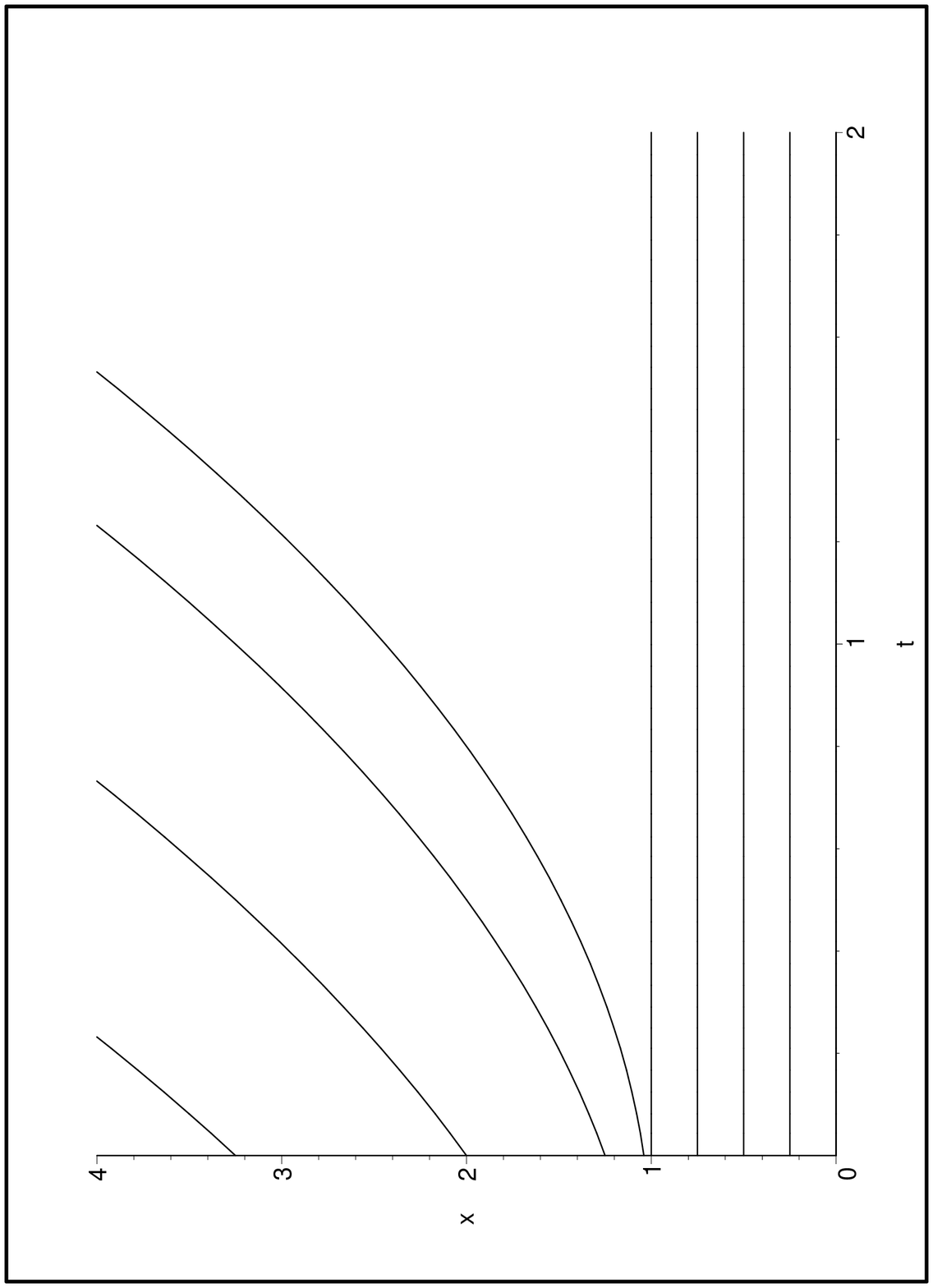}
\includegraphics[width=30mm, angle=-90]{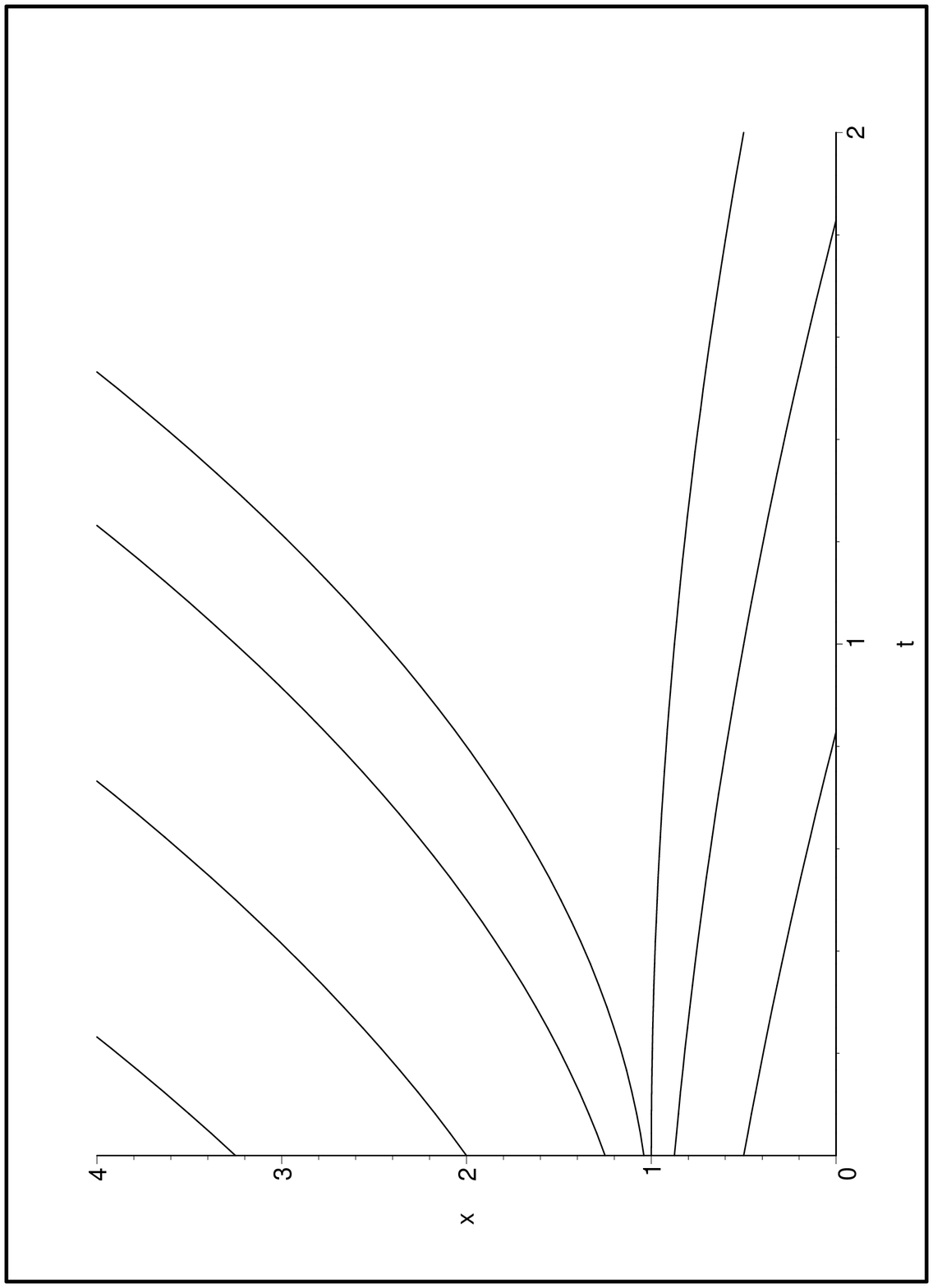}
\includegraphics[width=30mm, angle=-90]{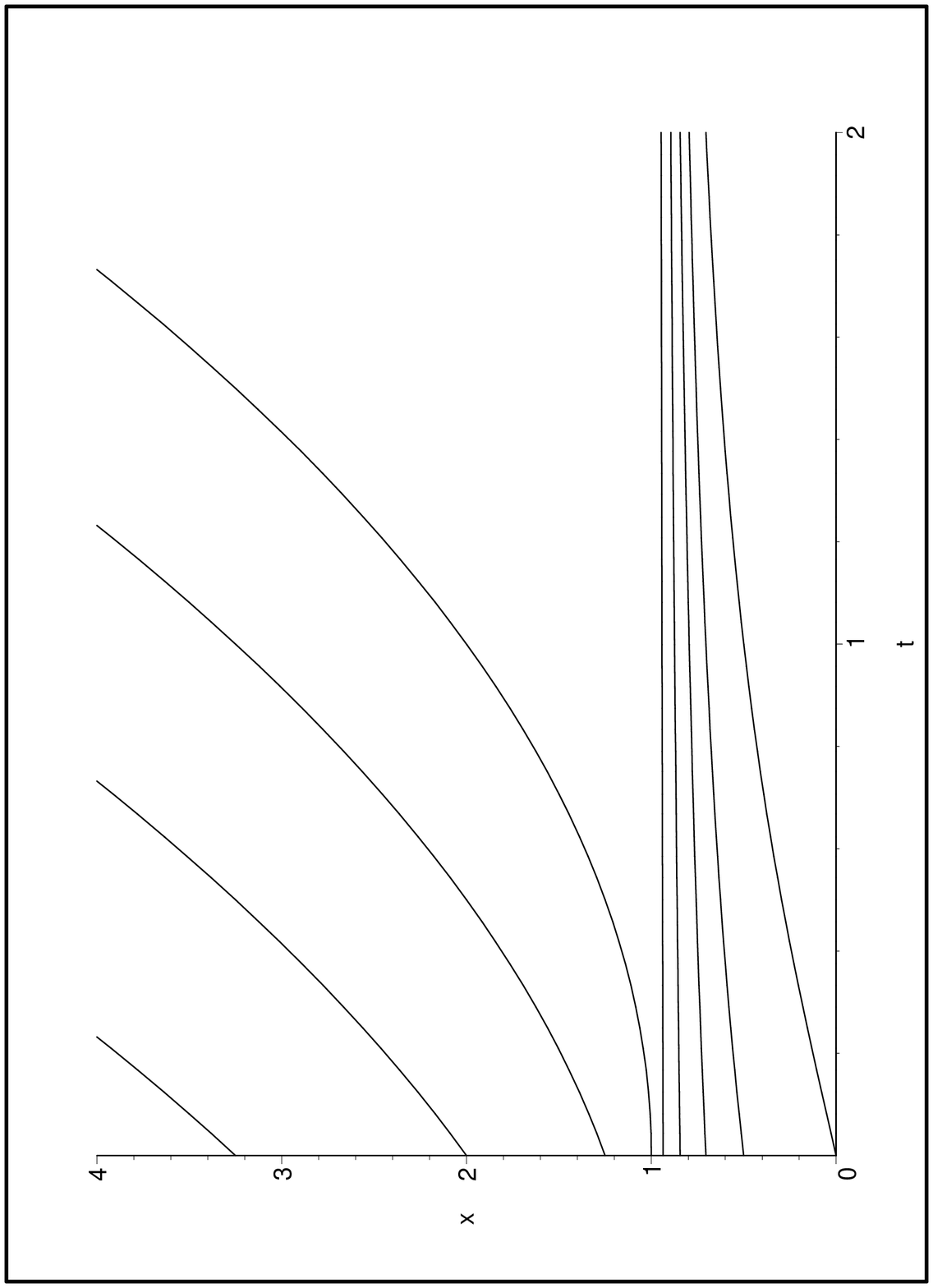}
\end{center}

For the $\oplus$-domain we have just analyzed there is a generating path as in Example \ref{ex:startingpoint}: we set $\Phi(t):= X_t^{x_0}$ for $t\in[0,t_0[$ with $t_0$ as in Theorem \ref{thm:types}. Furthermore let $h_{max}$ be the supremum of $h>0$ appearing in \eqref{inf}. Then we set for $t\in ]-h_{max},0[$ (in case 1 above the left endpoint is included) $\Phi(t):=x$ where $x$ satisfies $X_{-t}^x=x_0$. The point $x$ is unique since the paths are strictly monotonically increasing. For the interval $[-h_{max},t_0[$ resp. $]-h_{max},t_0[$ we write $I$.

Now we analyze the structure of the process step-by-step. To this end we have to consider 6 cases: starting from a $\oplus$, $\ominus$ or $\odot$-domain we can proceed upwards and downwards. W.l.o.g. we will go upwards. For the other three case one has just to interchange the r\^oles of $\oplus$ and $\ominus$ and of $\infty$ and $-\infty$.

We start with the $\oplus$-domain (cases a,b,c from above): In the first case we are done. There is no interval above the one we considered. In case b the behavior in $y$ is not known. Either there starts another $\oplus$-domain or a $\odot$-domain. In case c we already know that a $\odot$-domain starts which might consist of a single point.

Analyzing a $\odot$-domain the upper endpoint $y$ can be $\infty$; in this case we are done. The upper endpoint might belong to the $\odot$-domain. In this case we can continue with either a $\oplus$ or a $\ominus$-domain. In the second case the paths in the $\ominus$-domain can reach $y$ or not. If the upper endpoint $y<\infty$ belongs to the $\odot$-domain, we have to continue with a $\oplus$ domain.

In the case of the $\ominus$-domain we have to proceed as in the above consideration leading to ($\star$). We set
\begin{align}
  y:=\sup \{x\geq x_0 : \text{ there exists an } h>0 \text{ such that } X_{t+h}^x = X_t^{x_0} \}.
\end{align}
If $y=\infty$ we are done. Otherwise this upper endpoint might still belong to the $\ominus$-domain or not. Above we can have a $\oplus$ or a $\odot$-domain or again a $\ominus$-domain.

These considerations lead to the following classification theorem:

\begin{theorem} \label{thm:structure}
A family of functions $t\mapsto X^x_t$, each mapping $[0,\infty[$ into $\bbr$, is a deterministic Hunt process if and only if there exists a decomposition of $\bbr$ into disjoint ordered intervals $(J_j)_{j\in Z}$ where $Z\subset \{-n,...,0,...,m\}$ with $n,m\in\bbn\cup \{\infty \}$ such that
on every Interval $J_j$ the functions $t\mapsto X^x_t$ (for $x\in J_j$) are either all constant or there exists a continuous function $\Phi_j:I_j\to J_j$ called the generating path for $J_j$ which is surjective and either strictly monotonically increasing or strictly monotonically decreasing and such that
  \[
    X_t^x=\Phi_j(t+\Phi_j^{-1}(x)) \text{ for } x\in J_j \text{ and } t\in [0,\infty[ \, \cap \, (I_j-\Phi_j^{-1}(x))
  \]
and the $I_j$ are intervals containing zero.
\end{theorem}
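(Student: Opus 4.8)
The plan is to prove the two implications separately, treating the reconstruction of the process (the ``if'' direction) as a verification and the construction of the decomposition (the ``only if'' direction) as the substantial part. I would begin with the easier ``if'' direction. Given a decomposition $(J_j)_{j\in Z}$ with the stated generating paths, the resulting family is deterministic by construction, so it remains to check that it is Markov and Hunt. On a constant domain the time homogeneity \eqref{timehom} is trivial, and on a non-constant domain $J_j$ it follows from exactly the computation carried out in Example \ref{ex:startingpoint}: writing $X^x_t=\Phi_j(t+\Phi_j^{-1}(x))$ and using bijectivity and strict monotonicity of $\Phi_j$ on $I_j$, one verifies $X^x_s=X^y_t\Rightarrow X^x_{s+h}=X^y_{t+h}$ within $J_j$. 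Because the $J_j$ are ordered and the paths stay monotone inside each domain, two paths from different domains can interact only by meeting at a shared boundary value, where they stick, so \eqref{timehom} also holds across domains; this is the point that needs the ordering of the intervals. Continuity of every path is immediate from continuity of each $\Phi_j$ and from the constant pieces, and the process is then Hunt by the proposition characterizing deterministic Hunt processes as those with continuous paths.

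For the ``only if'' direction I would reconstruct the decomposition from the single-path information. By Theorem \ref{thm:types} every path $t\mapsto X^x_t$ is strictly monotone on an initial interval $[0,t_0[$ and constant afterwards, so each starting point $x$ has exactly one type: increasing ($\oplus$), decreasing ($\ominus$), or constant ($\odot$). Using that paths which meet must stick (a consequence of \eqref{timehom}) together with strict monotonicity, I would group the starting points of a fixed monotone type that are linked by a time shift $X^{x}_{t+h}=X^{x_0}_t$ into a single domain, and determine its ends by the infimum and supremum constructions in \eqref{inf} and the $\star$ case analysis preceding the theorem. On such a maximal monotone domain $J_j$ I would define the generating path by $\Phi_j(t):=X^{x_0}_t$ for $t\ge 0$ and extend it to negative arguments by $\Phi_j(-t):=x$, where $x$ is the unique point with $X^{x}_{t}=x_0$ (unique by strict monotonicity); Proposition \ref{prop:locallyconstant} rules out degenerate flat pieces, and one then checks that $\Phi_j$ is a continuous, strictly monotone surjection onto $J_j$ for which the representation formula holds.

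The main obstacle, I expect, is the global bookkeeping: assembling these domains into a \emph{countable} family of disjoint ordered intervals indexed by consecutive integers. Each monotone domain is a non-degenerate interval, so there are at most countably many of them; the delicate step is to show that what remains between and around them genuinely decomposes into constant \emph{intervals} (rather than, say, an uncountable nowhere-dense constant set), so that the alternating $\oplus/\ominus/\odot$ pattern can be enumerated in order as $Z\subset\{-n,\dots,m\}$. I would approach this by proving that every monotone domain has a well-defined adjacent domain above and below, using the $\star$ trichotomy at its endpoints, so that the domains can be listed by successively ``going upwards'' and ``downwards'' from a fixed starting domain exactly as in the discussion before the theorem; making this enumeration both exhaustive and countable is where I anticipate the real care is needed.
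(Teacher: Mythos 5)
Your plan follows essentially the same route as the paper: there is no separate proof environment for Theorem \ref{thm:structure} --- its proof is exactly the informal discussion preceding the statement (typing each path via Theorem \ref{thm:types}, the sticking property from \eqref{timehom}, the infimum construction \eqref{inf} with the $(\star)$ trichotomy at the endpoints, the generating path defined by $\Phi(t):=X_t^{x_0}$ and extended to negative arguments using uniqueness from strict monotonicity, and the step-by-step upward/downward enumeration of domains), and your ``if'' direction is the verification from Example \ref{ex:startingpoint} that the paper leaves implicit. The countability/exhaustiveness issue you flag at the end is a genuine delicate point, but the paper treats it no more carefully than you do --- it simply asserts that the successive enumeration covers $\bbr$ --- so your proposal matches both the approach and the level of rigor of the paper's own argument.
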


\begin{definition}
With every interval $J_j$ we associate the \emph{type} $\oplus$, $\ominus$ resp. $\odot$, if $\Phi$ is increasing, decreasing resp. the process is constant on the interval. This allows to describe the process from an abstract point-of-view as a sequence like $...|\odot|\oplus|\odot|\ominus|...$. This sequence is called the \emph{structure} of the process.
To emphasize that the upper endpoint of the lower interval belongs to the lower (resp. higher) interval we write $\odot]\oplus$ (resp. $\odot[\oplus$).
\end{definition}

\begin{remarks} \label{rem:important}
a) Obviously the behavior of the paths of the Hunt process is totally described by the decomposition $(J_j)_{j\in Z}$ and the sequence of generating paths. One has to observe that a path starting in a $\oplus$-domain can reach the lowest point of a $\odot$-domain becoming constant and the same is true for a path starting in a $\ominus$-domain reaching the highest point of a $\odot$-domain. If we want to emphasize this we write $\oplus\hspace{0.5mm}[\hspace{-2mm}\rightarrow \odot$ resp. $\odot\hspace{1.5mm}]\hspace{-4mm}\leftarrow \ominus$.

b) If $J_j$ is a $\oplus$-domain then $J_{j+1}$ can not be a $\ominus$-domain. Between the two there has to be a $\odot$ (which can be of course degenerate, i.e. consisting of only one point).

c) Using the second convention of the above definition we obtain that $\oplus]\oplus$ and $\oplus]\odot$ are not allowed. The paths of the right endpoint of the lower domain has to be strictly monotonically increasing. Writing $\Phi_1:I_1\to J_1$ for the generating path of the lower interval, $\oplus[\oplus$ and $\oplus[\odot$ are allowed but make only sense if the right endpoint of the interval $I_1$ is $\infty$. An analogous statement holds for $\ominus|\ominus$ and $\odot|\ominus$.

d) In every $\oplus$- and $\ominus$-domain there exists one unique point $x_j$ such that $\Phi^{-1}(x_j)=0$.

e) Consider $x\to \infty$ (resp. $x\to -\infty$). Either $m=\infty$ (resp. $n=-\infty$) or there exists a highest (lowest) interval. If furthermore $J_m$ ($J_n$) is of the type $\oplus$ (resp. $\ominus$), the right endpoint of $\Phi_m$ ($\Phi_n$) has to be $\infty$. If this was not the case we would introduce a killing.

f) To get a unique representation: Plug together $\odot$ intervals if they follow each other, i.e. do not allow $\odot|\odot$. Chose always the middle point of an interval as the one $x_j$ with $\Phi^{-1}(x_j)=0$ for ever $\oplus$- and $\ominus$-domain; except for the lowest/highest interval: if there is a lowest interval $J_n=]-\infty,b]$ or $]-\infty,b[$, set $x_j:=b-1$. If there is a highest interval $J_m=[a,\infty[$ or $]a,\infty[$, set $x_j:=a+1$. And finally claim $0 \in J_0$.

g) Only the following types of intervals $I_j$ appear ($a,b\in\bbr$): $]-\infty,\infty[$, $]-\infty,b[$, $]a,\infty[$, $]a,b[$, $[a,\infty[$ and $[a,b[$.
\end{remarks}

Occasionally we will write
\[
  \Phi_\oplus: I_\oplus \to J_\oplus \text{ and } \Phi_\ominus: I_\ominus \to J_\ominus
\]
if we want to emphasize the type of the generating path rather then the relative position of $J_j$.

\section{Characterization of Some Subclasses}

In this section we characterize when a deterministic Hunt process is Feller, $C_b-$Feller and rich. Furthermore we calculate the symbol of a deterministic Feller process. Again we restrict ourselves to one-dimensional processes.

Let us start with the Feller property:

\begin{lemma} \label{lem:cont}
Let $X$ be a deterministic Hunt process. The function $x\mapsto
T_tu(x)$ is continuous for every $t\geq 0$ and $u\in C(\bbr)$ if and
only if the process is of pure type or if the structure consists
only of the following building blocks:
\begin{align} \label{fellerstructures}
\oplus|\odot, \odot|\ominus
\end{align}
or $\odot|\oplus$, $\ominus|\odot$ if in these two cases the left
endpoint of $I_\oplus$ resp. $I_\ominus$ is $-\infty$.
\end{lemma}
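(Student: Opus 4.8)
The plan is to reduce the statement about the operators $T_t$ to a statement about the single map $x \mapsto X_t^x$, and then to read off continuity of this map from the structure theorem. Since the process is deterministic we have $T_t u(x) = \bbe^x u(X_t) = u(X_t^x)$, so I would first argue that, for fixed $t$, the function $x \mapsto T_t u(x)$ is continuous for every $u \in C(\bbr)$ if and only if $x \mapsto X_t^x$ is continuous. The ``if'' direction is just composition of continuous maps. For ``only if'' I would test against a fixed bounded strictly increasing $u \in C(\bbr)$, e.g. $u = \arctan$: since such a $u$ is a homeomorphism of $\bbr$ onto an open interval, the map $x \mapsto u(X_t^x)$ is continuous at a point if and only if $x \mapsto X_t^x$ is, even allowing the latter to run off to $\pm\infty$. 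Thus the whole lemma becomes: characterize when $x \mapsto X_t^x$ is continuous for every $t \geq 0$.

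Next I would dispose of continuity in the interior of each interval $J_j$ of the decomposition from Theorem \ref{thm:structure}. On a $\odot$-interval $X_t^x = x$ is trivially continuous. On a $\oplus$- or $\ominus$-interval the formula $X_t^x = \Phi_j(t + \Phi_j^{-1}(x))$ (with the understanding that the path is clamped at the top/bottom value once the argument leaves $I_j$) is continuous in $x$ because $\Phi_j$ and its inverse are continuous and strictly monotone, and the clamping preserves continuity since $\Phi_j$ is continuous up to the relevant endpoint. Hence the only possible failures of continuity occur at the (at most countably many) boundary points between two adjacent intervals, and the problem reduces to a finite case analysis of the admissible adjacent pairs.

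The heart of the proof is this boundary analysis. Fix a boundary point $b = \sup J_j = \inf J_{j+1}$ and compute, for each fixed $t \geq 0$, the one-sided limits $\lim_{x \to b^-} X_t^x$ and $\lim_{x \to b^+} X_t^x$ together with the value $X_t^b$ at the interval owning $b$. The decisive computation is that, for a $\oplus$-domain with generating path $\Phi_j : I_j \to J_j$, as $x \downarrow \inf J_j$ one has $\Phi_j^{-1}(x)$ tending to the left endpoint of $I_j$; if that endpoint is $-\infty$ then $X_t^x = \Phi_j(t + \Phi_j^{-1}(x)) \to \inf J_j$ for every $t$, whereas if it is a finite $-h_{max}$ then $X_t^x \to \Phi_j(t - h_{max}) > \inf J_j$ for every $t > 0$. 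Dually, as $x \uparrow \sup J_j$ the value $X_t^x$ always tends to $\sup J_j$, whether the top is reached in finite time (case c) or only asymptotically (case b). Running this through the admissible pairs --- using Remark \ref{rem:important} b) to exclude $\oplus|\ominus$ and $\ominus|\oplus$, Remark \ref{rem:important} f) to exclude $\odot|\odot$, and Remark \ref{rem:important} c) to pin down which interval owns $b$ --- gives: $\oplus|\odot$ and $\odot|\ominus$ are always continuous; $\oplus|\oplus$ and $\ominus|\ominus$ are always discontinuous (the left limit equals $b$ while $X_t^b > b$ for $t>0$); and $\odot|\oplus$ (resp. $\ominus|\odot$) is continuous precisely when the left endpoint of $I_\oplus$ (resp. $I_\ominus$) is $-\infty$, by the displayed computation. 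For the pure types there are no interior boundaries, so interior continuity already suffices. Assembling these cases yields exactly the list \eqref{fellerstructures}.

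I expect the main obstacle to be the bookkeeping in the boundary analysis: correctly determining, in each admissible adjacency and under each convention $\odot]\oplus$ versus $\odot[\oplus$, which interval the boundary point belongs to, and then verifying that the required equality of limits must hold simultaneously for all $t \geq 0$ (a single bad $t > 0$ already destroys continuity). Keeping the finite-time (case c) and asymptotic (case b) sub-cases of the $\oplus$/$\ominus$ domains straight, and checking that $\odot[\oplus$ is incompatible with a left endpoint $-\infty$ (so that the endpoint condition alone already captures the ownership constraint), is where the care is needed; the limit computations themselves are routine once the setup is fixed.
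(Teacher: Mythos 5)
Your overall strategy is the same as the paper's: establish continuity in the interior of each $J_j$ by composition of continuous maps, and then decide continuity at each boundary point between adjacent domains by a case analysis of the admissible adjacencies (the paper phrases the negative cases as $T_tu(x_n)\to u(b)$ versus $T_tu(b)=u(\Phi_2(t+\Phi_2^{-1}(b)))\neq u(b)$ for a suitable $u$, which is exactly your one-sided-limit computation in disguise). Your preliminary reduction via a strictly monotone bounded test function such as $\arctan$ is a clean way to pass from ``continuity of $x\mapsto T_tu(x)$ for all $u$'' to ``continuity of $x\mapsto X_t^x$'', and your treatment of the cases $\oplus|\odot$, $\odot|\ominus$, $\oplus|\oplus$, $\ominus|\ominus$, $\odot|\oplus$, $\ominus|\odot$ matches the paper's.

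There is, however, one concrete error in your case analysis: you claim that Remark \ref{rem:important}~b) excludes \emph{both} $\oplus|\ominus$ and $\ominus|\oplus$. It only excludes the former (two paths approaching the same boundary point from opposite sides would collide). The adjacency $\ominus|\oplus$, where the paths diverge away from the common boundary point $b$, is a perfectly admissible structure --- for instance $X_t^x=x-t$ on $]-\infty,0[$ and $X_t^x=x+t$ on $[0,\infty[$, a close relative of the space dependent drift of Example \ref{ex:spacedepdrift} --- and the paper's proof explicitly lists $\ominus|\oplus$ among the blocks that must be shown to produce a discontinuity. As it stands, your ``only if'' direction is silent on this case, so a process containing $\ominus|\oplus$ is not covered by your argument. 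The gap is easily repaired with your own method: the left limit at $b$ is $\leq b$ and equals $b$ only when the left endpoint of $I_\ominus$ is $-\infty$, the right limit is $\geq b$ and equals $b$ only when the left endpoint of $I_\oplus$ is $-\infty$, while $X_t^b$ for $t>0$ is strictly above or strictly below $b$ according to which domain owns $b$; in every sub-case at least one one-sided limit disagrees with the value, so $\ominus|\oplus$ is always discontinuous and is correctly absent from the list \eqref{fellerstructures}. With that case added, your proof is complete and essentially coincides with the paper's.
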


\begin{proof}
Let $X$ be of one of the structures prescribed in the lemma. We
consider w.l.o.g. the case $\oplus|\odot$. Let $t\geq 0$ be fixed.
In the interior of each domain we have continuity, since
\[
T_tu(x)=u(X_t^x)=u\Big(\Phi_\oplus(t+\Phi_\oplus^{-1}(x))\Big)
\]
which is a composition of continuous functions and the case $\odot$ is trivial. Now we have to deal with the endpoints of the intervals. In any case the $\odot$-domain is a closed interval (cf. Remark \ref{rem:important} c)) . Without loss of generality, let $(x_n)_{n\in\bbn} \subseteq J_\oplus$ be a sequence in the $\oplus$-domain tending to the lower endpoint of $J_\odot$. We have to consider two cases:
\begin{center}
\includegraphics[width=30mm, angle=-90]{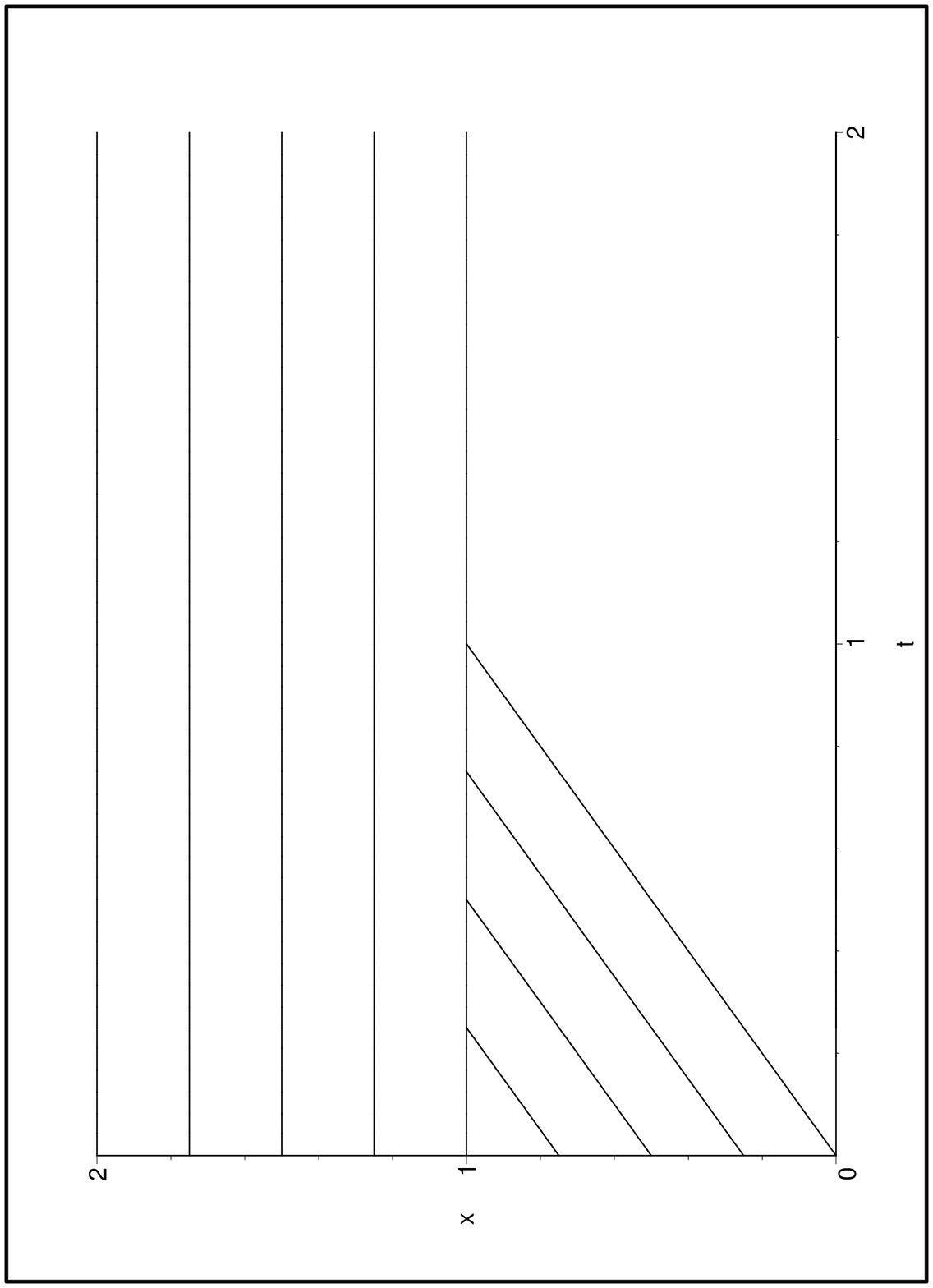}
\includegraphics[width=30mm, angle=-90]{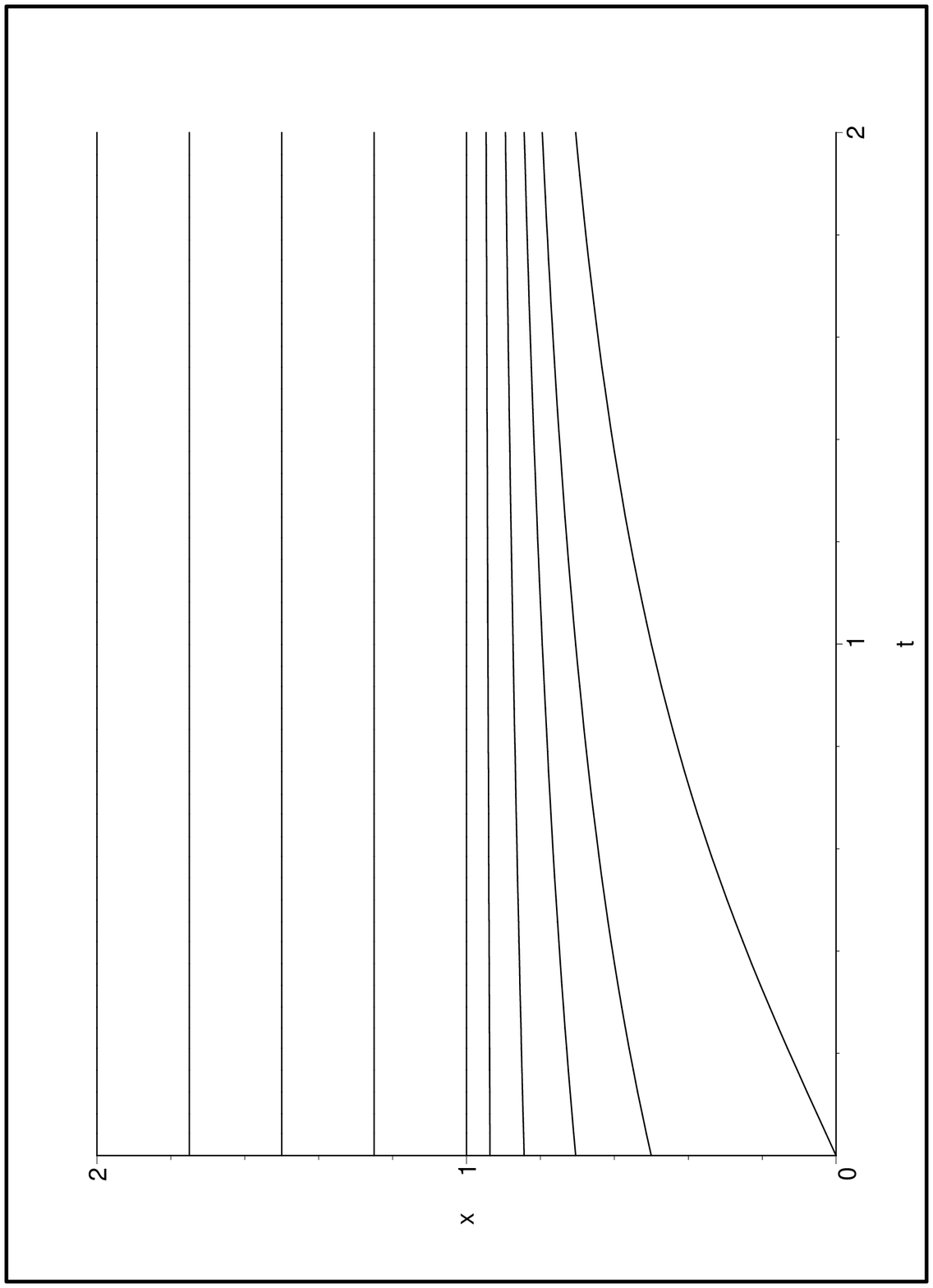}
\end{center}
In the case $\oplus\hspace{0.5mm}[\hspace{-2mm}\rightarrow \odot$ there exists a point $w<x$ such that for $x_n\in [w,x]\subseteq J_\oplus$ we have $X_t^{x_n}=x$. This implies continuity. If the right endpoint of $I_\oplus$ is $\infty$, i.e. the paths starting in $J_\oplus$ do not reach $J_\odot$, we have
\[
T_tu(x_n)=u\Big(\Phi_\oplus(\underbrace{t+\Phi_\oplus^{-1}(x_n)}_{\to \infty})\Big) \to u(x).
\]
It remains to show that indeed only the `building blocks' above are
allowed in order to obtain continuity. Each of the other blocks
$\oplus|\oplus$, $\ominus|\oplus$, $\ominus|\ominus$ as well as
$\odot|\oplus$, $\ominus|\odot$ (not fulfilling the restriction of
the lemma) leads to a contradiction, if it appears in the structure
of the process. Since the reasoning is always quite similar we only
consider the case $\oplus|\oplus$ (i.e. $\oplus[\oplus$ by Remark
\ref{rem:important} c)):
let $t>0$ and let $J_1,J_2$ be two neighboring $\oplus$-domains. Let $x$ be the lower endpoint of $J_2$. Let $u\in C_\infty(\bbr)$ such that it is the identity in a neighborhood $U$ of $x$. Let $(x_n)_{n\in\bbn}\subseteq U\cap J_1$ such that $x_n\to x$. As in the considerations above we have $T_tu(x_n) \to u(x)=x$, but
\[
T_t(x)=\Phi_2\Big(\underbrace{t}_{>0}+\Phi_2^{-1}(x)\Big)>x
\]
since $\Phi_2$ is strictly monotonically increasing.
\end{proof}

\begin{lemma}
Let $X$ be a deterministic Hunt process which is of one of the
structures described in Lemma \ref{lem:cont}. In this case,
$x\mapsto T_tu(x)$ is vanishing at infinity for every $t\geq 0$ if
and only if (i) $n=\infty$ or $I_{-n}$ is not a $\oplus$-domain or
the left endpoint of $I_{-n}$ is $\infty$ and (ii) $m=\infty$ or
$I_{m}$ is not a $\ominus$-domain or the left endpoint of $I_{m}$ is
$\infty$.
\end{lemma}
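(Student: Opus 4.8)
The plan is to reduce the statement to a single scalar computation at each end of the state space. For fixed $t$ I would first record two structural facts about the map $x\mapsto X^x_t$. By Lemma~\ref{lem:cont} (which applies under the standing structural hypothesis) this map is continuous, and because two paths that meet must stick together, it is moreover nondecreasing: if $x<y$ the continuous paths $s\mapsto X^x_s$ and $s\mapsto X^y_s$ start ordered and can coincide only by merging, so $X^x_s\le X^y_s$ for all $s$. Consequently the one-sided limits $L^+(t):=\lim_{x\to+\infty}X^x_t\in(-\infty,+\infty]$ and $L^-(t):=\lim_{x\to-\infty}X^x_t\in[-\infty,+\infty)$ exist. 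Since $T_tu(x)=u(X^x_t)$ and $C_\infty(\bbr)$ consists of the continuous functions vanishing at $\pm\infty$, the map $T_tu$ vanishes at infinity for every $u\in C_\infty(\bbr)$ and every $t\ge0$ if and only if $L^+(t)=+\infty$ and $L^-(t)=-\infty$ for all $t$. The forward implication is immediate from $u\to0$ at infinity; for the converse, if, say, $L^+(t)=L<\infty$, I would pick a bump function $u\in C_\infty(\bbr)$ with $u(L)\neq0$, so that $T_tu(x)\to u(L)\neq0$ and hence $T_tu\notin C_\infty(\bbr)$.

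It then remains to identify, from the structure of Theorem~\ref{thm:structure}, when $L^+(t)=+\infty$; the condition for $L^-(t)=-\infty$ follows by the symmetry $\oplus\leftrightarrow\ominus$, $+\infty\leftrightarrow-\infty$. I would distinguish whether a highest interval exists. If $m=\infty$ there is no topmost interval, the ordered partition $(J_j)$ exhausts a neighbourhood of $+\infty$, so the infima of the domains containing $x$ tend to $+\infty$ as $x\to+\infty$; together with monotonicity this forces $L^+(t)=+\infty$. If $m<\infty$ there is a highest interval $J_m$ with $\sup J_m=+\infty$, and I would split on its type. For a $\odot$-domain $X^x_t=x\to+\infty$. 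For a $\oplus$-domain Remark~\ref{rem:important}~e) gives that the right endpoint of $I_m$ is $+\infty$, whence $X^x_t=\Phi_m(t+\Phi_m^{-1}(x))\to\sup J_m=+\infty$. For a $\ominus$-domain $X^x_t=\Phi_m(t+\Phi_m^{-1}(x))$ with $\Phi_m$ decreasing, and as $x\to+\infty=\sup J_m$ the argument $t+\Phi_m^{-1}(x)$ tends to $t+\alpha$, where $\alpha$ denotes the left endpoint of $I_m$; hence $X^x_t\to+\infty$ when $\alpha=-\infty$ and $X^x_t$ tends to a finite limit when $\alpha$ is finite. Collecting the cases, $L^+(t)=+\infty$ for all $t$ exactly when $m=\infty$, or $J_m$ is not a $\ominus$-domain, or $J_m$ is a $\ominus$-domain whose $I_m$ is unbounded on the left, which is condition (ii) (the side on which $I_m$ must be unbounded being $-\infty$).

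The analogous computation at $-\infty$, obtained by reflecting the roles of $\oplus$ and $\ominus$ and of the two ends, yields that $L^-(t)=-\infty$ for all $t$ if and only if condition (i) holds, and combining the two equivalences completes the argument. The step I expect to be the main obstacle is the case $m=\infty$ (and its mirror $n=\infty$): one must exclude that infinitely many stacked $\ominus$-domains near $+\infty$ drag $X^x_t$ down to a finite limit in finite time. I would dispatch this using the global monotonicity of $x\mapsto X^x_t$ together with the fact that the ordered partition exhausts $\bbr$, so the domains carrying large $x$ themselves escape to $+\infty$. The only other point requiring care is the explicit bump function used for the necessity direction, which must be continuous, vanish at infinity, and take a nonzero value at the finite limit $\lim_{x\to+\infty}X^x_t$ (respectively its mirror at $-\infty$).
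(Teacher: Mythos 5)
Your proposal is correct and follows essentially the same route as the paper: the core step in both is that, in the extreme monotone domain, $\Phi^{-1}(x)$ tends to the left endpoint $a$ of the corresponding $I$-interval as $x\to\pm\infty$, so $X_t^x$ escapes to infinity iff $a=-\infty$, with a bump function at the finite limit $\Phi(t+a)$ witnessing failure otherwise. The extra scaffolding you add — monotonicity of $x\mapsto X_t^x$ via the sticking-together property, and the explicit treatment of the $m=\infty$ (resp.\ $n=\infty$) case using that the ordered partition exhausts $\bbr$ — is left implicit in the paper's shorter proof but does not change the argument.
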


\begin{proof}
The case of the pure $\odot$-type is trivial. The right endpoint of $I_\ominus$ and $I_\oplus$ is always $\infty$, since we do not consider processes with killing (cf. Remark \ref{rem:important} e)).  W.l.o.g. we consider $x\to -\infty$ in the $\oplus$-domain:
\[
T_tu(x)=u\Big(\Phi_\oplus(\underbrace{t+\Phi_\oplus^{-1}(x)}_{\to t+a})\Big)
\]
If $a=-\infty$, this expression tends to zero since $u\in C_\infty(\bbr)$. If not, there exists a $u\in C_\infty(\bbr)$ such that $u(\Phi_\oplus(t+a))\neq 0$.
\end{proof}

The following lemma is a reformulation of \cite{revuzyor} Proposition III.2.4.
\begin{lemma}
Let $X$ be a Hunt process which satisfies $(F1)$. In this case $(F2)$ is equivalent to
\begin{center}
$(F2)^*$ \hspace{5mm} $T_tu(x)\xrightarrow[]{t \to 0} u(x)$ for every $u\in C_\infty(\bbr^d)$ and every $x\in\bbr$.
\end{center}
\end{lemma}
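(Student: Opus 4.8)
The implication $(F2)\Rightarrow(F2)^*$ is immediate, since uniform convergence forces pointwise convergence at every point $x$; the whole content lies in the converse, and this is where $(F1)$ enters. The plan is to run the classical resolvent argument that upgrades pointwise (``weak'') continuity at $0$ to strong continuity, using the Hahn--Banach theorem to bridge the final gap from pointwise to uniform convergence. Throughout I write $T_tg(x)=\int g(y)\,P_t(x,dy)$ with $P_t(x,\cdot)$ a sub-probability measure, so that the operators are contractions, $\norm{T_tg}_\infty\le\norm{g}_\infty$.

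First I would record two preliminaries. Using the semigroup law and $(F2)^*$ I would show that $t\mapsto T_tu(x)$ is right-continuous for each fixed $x$ and $u\in C_\infty(\bbr^d)$: indeed, by bounded convergence (the integrand tends to $0$ pointwise by $(F2)^*$ at the point $y$ and is bounded by $2\norm{u}_\infty$),
\[
T_{t+h}u(x)-T_tu(x)=\int\big(T_hu-u\big)(y)\,P_t(x,dy)\ \longrightarrow\ 0\qquad(h\downarrow 0).
\]
In particular $t\mapsto T_tu(x)$ is measurable, so the resolvent
\[
U_\lambda u(x):=\int_0^\infty e^{-\lambda t}\,T_tu(x)\,dt,\qquad\lambda>0,
\]
is well defined with $\norm{U_\lambda u}_\infty\le\lambda^{-1}\norm{u}_\infty$.

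Next I would introduce $\mathcal{D}:=\{u\in C_\infty(\bbr^d):\norm{T_tu-u}_\infty\to 0 \text{ as }t\downarrow 0\}$, which is exactly the set on which $(F2)$ holds; the contraction property makes $\mathcal{D}$ a \emph{closed} linear subspace of $C_\infty(\bbr^d)$. I claim $U_\lambda u\in\mathcal{D}$ for every $u$ and $\lambda$. That $U_\lambda u\in C_\infty(\bbr^d)$ follows from $(F1)$ together with dominated convergence in the spatial variable (each $T_tu$ is continuous and vanishes at infinity, uniformly controlled by $e^{-\lambda t}\norm{u}_\infty$). Strong continuity of $U_\lambda u$ comes from the resolvent identity $T_tU_\lambda u=e^{\lambda t}\big(U_\lambda u-\int_0^t e^{-\lambda s}T_su\,ds\big)$, obtained by Fubini and the semigroup law, which yields $\norm{T_tU_\lambda u-U_\lambda u}_\infty\le(e^{\lambda t}-1)\lambda^{-1}\norm{u}_\infty+e^{\lambda t}t\,\norm{u}_\infty\to 0$.

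Finally, substituting $s=\lambda t$ gives $\lambda U_\lambda u(x)=\int_0^\infty e^{-s}T_{s/\lambda}u(x)\,ds$, and by $(F2)^*$ with bounded convergence, $\lambda U_\lambda u(x)\to u(x)$ pointwise as $\lambda\to\infty$, while $\abs{\lambda U_\lambda u}\le\norm{u}_\infty$. The main obstacle is precisely that this convergence is only pointwise, not uniform, so it does not directly place $u$ in the closed space $\mathcal{D}$. To close the gap I would argue by duality: if $\mathcal{D}\neq C_\infty(\bbr^d)$, Hahn--Banach and the Riesz representation theorem provide a nonzero finite signed measure $\mu$ annihilating $\mathcal{D}$; then $\langle\mu,\lambda U_\lambda u\rangle=0$ for all $\lambda$, whereas bounded convergence gives $\langle\mu,\lambda U_\lambda u\rangle\to\langle\mu,u\rangle$, forcing $\langle\mu,u\rangle=0$ for every $u\in C_\infty(\bbr^d)$ and hence $\mu=0$, a contradiction. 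Therefore $\mathcal{D}=C_\infty(\bbr^d)$, which is $(F2)$.
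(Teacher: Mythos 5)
Your proof is correct: the forward implication is trivial, and your converse is the classical resolvent argument (right-continuity of $t\mapsto T_tu(x)$ via the semigroup law, $U_\lambda u\in\mathcal{D}$ with $\mathcal{D}$ a closed subspace by the contraction property, pointwise convergence $\lambda U_\lambda u\to u$, and the Hahn--Banach/Riesz duality step to conclude $\mathcal{D}=C_\infty(\bbr^d)$), with all the interchanges of limits properly justified. The paper gives no proof at all for this lemma --- it simply cites Proposition III.2.4 of Revuz and Yor --- and your argument is essentially the standard proof of that cited result, so there is nothing to compare beyond noting that you have supplied the details the paper outsources.
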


\begin{lemma}
Let $X$ be a deterministic Hunt process satisfying $(F1)$. In this case $(F2)^*$ holds.
\end{lemma}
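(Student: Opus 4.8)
The plan is to reduce $(F2)^*$ to the continuity of a single path at the time origin, which is exactly what the Hunt property provides. First I would record the two facts that make everything collapse. Since the process is deterministic, $X_t(\omega)=f(x,t)$ does not depend on $\omega$ under $\bbp^x$, so the associated operators act by evaluation along the path:
\[
T_tu(x)=\bbe^x u(X_t)=\int_\Omega u\big(X_t^x(\omega)\big)\,\bbp^x(d\omega)=u(X_t^x).
\]
Moreover, the standing assumption $\bbp^x(X_0=x)=1$ forces $f(x,0)=x$, i.e. $X_0^x=x$ for every $x\in\bbr$. These two observations turn the desired convergence $T_tu(x)\to u(x)$ into the statement that $u(X_t^x)\to u(X_0^x)$ as $t\downarrow 0$.

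Next I would invoke the earlier Proposition according to which a deterministic Markov process is Hunt if and only if its paths are continuous. Hence for the Hunt process $X$ the map $t\mapsto X_t^x$ is continuous on $[0,\infty[$; in particular it is right-continuous at $t=0$, so $X_t^x\to X_0^x=x$ as $t\downarrow 0$. Since $u\in C_\infty(\bbr)$ is (uniformly) continuous, I may pass $u$ through this limit to obtain
\[
T_tu(x)=u(X_t^x)\xrightarrow[t\to 0]{}u(x),
\]
which is precisely $(F2)^*$ for the fixed $x$; as $x\in\bbr$ and $u\in C_\infty(\bbr)$ were arbitrary, the claim follows.

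I do not expect a genuine obstacle here: the only points that require a word are the identity $T_tu(x)=u(X_t^x)$ (immediate from determinism), the initial condition $X_0^x=x$ (immediate from $\bbp^x(X_0=x)=1$), and the exchange of $u$ with the limit (immediate from continuity of $u$). It is worth noting that the hypothesis $(F1)$ is not actually used in the computation; it is carried along only so that, in combination with the preceding reformulation lemma (Revuz--Yor III.2.4), the conclusion $(F2)^*$ upgrades to the full Feller condition $(F2)$ for deterministic Hunt processes satisfying $(F1)$.
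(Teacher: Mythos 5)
Your proof is correct and follows essentially the same route as the paper: both arguments reduce $(F2)^*$ to the right-continuity of $t\mapsto X_t^x$ at $t=0$ (which the Hunt property guarantees) composed with the continuity of $u$. The paper merely phrases the path via the generating function $\Phi_j$ from the structure theorem, whereas you invoke path continuity directly; your remark that $(F1)$ plays no role in the computation is also accurate.
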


\begin{proof}
Let $x\in\bbr$. If $x\in J_\odot$ the statement is trivial, if not, we have to consider
\[
T_tu(x)=u\Big(\Phi_j(t+\Phi_j^{-1}(x))\Big)\xrightarrow[t\downarrow 0]{} u(x), \hspace{10mm} j\in\{\oplus,\ominus\}.
\]
\end{proof}

Putting the results of Lemmas \ref{lem:cont} - 3.4 together we obtain the follwing result:

\begin{theorem} \label{thm:feller}
Let $X$ be a deterministic Hunt process. $X$ is Feller if and only
if it is of one of the structures described Lemma \ref{lem:cont} and
writing
\[
\Phi_{-n}:I_{-n} \to J_{-n} \text{ resp. } \Phi_m:I_m\to J_m
\]
the left endpoints of $I_{-n}$ and $I_m$ are $-\infty$.
\end{theorem}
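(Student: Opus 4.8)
The plan is to unwind the Feller property into its two constituent requirements and read each one off from the preparatory lemmas. Recall that $X$ is Feller precisely when $(F1)$ and $(F2)$ hold, and that $(F1)$ says $T_t$ maps $C_\infty(\bbr)$ into itself; this in turn splits into two independent demands on $T_tu$ for $u\in C_\infty(\bbr)$ and $t\geq 0$, namely that $T_tu$ be continuous and that it vanish at infinity. The first thing to observe is that, for a deterministic Hunt process, $(F2)$ carries no information beyond $(F1)$: granting $(F1)$, the reformulation of \cite{revuzyor} (Lemma 3.3) reduces $(F2)$ to the pointwise condition $(F2)^*$, and Lemma 3.4 shows $(F2)^*$ holds automatically. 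Hence for such processes the Feller property is equivalent to $(F1)$ alone, and the proof reduces to deciding when $T_tu\in C_\infty(\bbr)$ for all admissible $u$ and $t$.

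Next I would feed the two halves of $(F1)$ into the two structural lemmas. Continuity of $x\mapsto T_tu(x)$ for every $u\in C(\bbr)$ and every $t\geq 0$ is, by Lemma \ref{lem:cont}, equivalent to $X$ being of pure type or assembled solely from the admissible blocks $\oplus|\odot$, $\odot|\ominus$, $\odot|\oplus$, $\ominus|\odot$ (the last two subject to the stated left-endpoint restriction). Given such a structure, the vanishing-at-infinity half is governed by the subsequent lemma (3.2), whose conditions (i) and (ii) constrain only the extreme intervals $J_{-n}$ and $J_m$. Combining the two equivalences, $(F1)$ --- and therefore the Feller property --- holds if and only if the structure is admissible in the sense of Lemma \ref{lem:cont} and, in addition, conditions (i) and (ii) are satisfied.

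The remaining and most delicate step is to verify that, once the admissible-structure constraint is in force, conditions (i) and (ii) collapse into the clean statement of the theorem, that the left endpoints of $I_{-n}$ and $I_m$ equal $-\infty$. I would argue this by a short case analysis on the type of each extreme interval. If $J_{-n}$ is a $\odot$-domain or $n=\infty$ there is no generating path $\Phi_{-n}$ and nothing to require; if $J_{-n}$ is a $\oplus$-domain, condition (i) is exactly the demand that the left endpoint of $I_{-n}$ be $-\infty$. The case needing genuine care is $J_{-n}$ a $\ominus$-domain, where (i) is automatically fulfilled but the theorem still asserts that the left endpoint of $I_{-n}$ is $-\infty$: this holds because the only admissible block leading upward out of a bottom $\ominus$-domain is $\ominus|\odot$, which by Lemma \ref{lem:cont} already forces the left endpoint of $I_\ominus$ to be $-\infty$ (and in the pure $\ominus$-type case the same conclusion is instead forced by condition (ii) applied to $I_m=I_{-n}$). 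The symmetric argument, interchanging the roles of $\oplus$ and $\ominus$ and of $+\infty$ and $-\infty$, settles the top interval $J_m$, and assembling these observations yields the stated equivalence.
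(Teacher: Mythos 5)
Your proposal is correct and follows exactly the route the paper intends: the paper gives no explicit proof beyond the remark that the theorem follows by ``putting the results of Lemmas \ref{lem:cont}--3.4 together'', and your argument is precisely that assembly. Your closing case analysis --- showing that the conditions (i), (ii) of Lemma 3.2 together with the left-endpoint restrictions already built into the admissible blocks $\odot|\oplus$ and $\ominus|\odot$ of Lemma \ref{lem:cont} collapse to the single clean requirement that the left endpoints of $I_{-n}$ and $I_m$ be $-\infty$ --- is the one genuinely non-trivial bookkeeping step, and you handle it correctly (including the pure-type case, which the paper only addresses implicitly in the remark following the theorem).
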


The restriction on $I_{-n}$ and $I_m$ are of course only necessary
if there is a lowest $\oplus$- resp. highest $\ominus$-domain. For
$C_b$-Feller processes we obtain the following:

\begin{theorem}
Let $X$ be a deterministic Hunt process. $X$ is $C_b$-Feller if and
only if it is of one of the structures described in Lemma
\ref{lem:cont}.
\end{theorem}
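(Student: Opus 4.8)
The plan is to prove the $C_b$-Feller characterization by showing it is equivalent to the continuity condition already isolated in Lemma~\ref{lem:cont}. Recall that a $C_b$-Feller process is defined via the space $C_b(\bbr)$ equipped with \emph{local} uniform convergence, so the two conditions to verify are: $(F1')$ $T_t$ maps $C_b(\bbr)$ into $C_b(\bbr)$ for every $t\geq 0$, and the continuity-at-zero condition $(F2)$ phrased in the local-uniform topology. The key observation is that, unlike the genuine Feller case, there is no decay-at-infinity requirement, and this is precisely why the extra endpoint restrictions on $I_{-n}$ and $I_m$ appearing in Theorem~\ref{thm:feller} drop out here. So I expect the $C_b$-Feller structures to be exactly the structures of Lemma~\ref{lem:cont}, with no further constraints.

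\textbf{The forward direction.}
First I would assume $X$ is $C_b$-Feller and deduce the allowed structures. Since $(F1')$ demands that $x\mapsto T_tu(x)$ be continuous for every bounded continuous $u$, and in particular for every $u\in C(\bbr)$ bounded, the hypothesis of Lemma~\ref{lem:cont} is met: the map $x\mapsto T_tu(x)$ is continuous for all such $u$ and all $t\geq0$. Lemma~\ref{lem:cont} then forces the structure to be of pure type or built only from the admissible blocks $\oplus|\odot$, $\odot|\ominus$, and the two conditional blocks $\odot|\oplus$, $\ominus|\odot$ with the stated $-\infty$ endpoint restriction. Thus every $C_b$-Feller deterministic Hunt process has one of the structures of Lemma~\ref{lem:cont}. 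The forward direction is therefore essentially a direct invocation of that lemma, once one notes that bounded continuous test functions suffice to drive its contradiction arguments.

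\textbf{The converse and the main obstacle.}
For the converse I would assume $X$ has one of the Lemma~\ref{lem:cont} structures and verify both $(F1')$ and $(F2)$. Property $(F1')$ — continuity of $x\mapsto T_tu(x)$ for bounded continuous $u$ — is exactly the content established in the proof of Lemma~\ref{lem:cont} for these structures; the composition argument $T_tu(x)=u(\Phi_j(t+\Phi_j^{-1}(x)))$ and the endpoint-gluing analysis go through verbatim, and boundedness of $u$ is preserved since $\norm{T_tu}_\infty\le\norm{u}_\infty$. For the continuity-at-zero condition, the pointwise statement $T_tu(x)\to u(x)$ as $t\downarrow0$ is already the conclusion of Lemma~3.4, valid under $(F1)$ and hence here. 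The one genuine step is to upgrade this pointwise convergence to \emph{local uniform} convergence on compacts, as the $C_b$-topology requires. The hard part will be this uniformity: on a compact set $K$ meeting an $\oplus$- or $\ominus$-domain one must show $\sup_{x\in K}\abs{u(\Phi_j(t+\Phi_j^{-1}(x)))-u(x)}\to0$, which I would obtain from uniform continuity of $u$ on a compact neighborhood together with the fact that $t+\Phi_j^{-1}(x)\to\Phi_j^{-1}(x)$ uniformly in $x\in K$ and that $\Phi_j$ is uniformly continuous on the relevant compact piece of $I_j$; the constant $\odot$-domains contribute nothing, and the finitely many endpoints where domains meet are handled by the same gluing estimates used in Lemma~\ref{lem:cont}. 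Assembling these two properties gives $C_b$-Feller, completing the equivalence.
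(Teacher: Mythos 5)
Your proposal is correct and follows essentially the same route as the paper: the ``only if'' direction is a direct appeal to Lemma~\ref{lem:cont} (whose contradiction arguments indeed only use bounded continuous test functions), and the ``if'' direction combines the continuity from Lemma~\ref{lem:cont} with boundedness via $\norm{T_tu}_\infty\le\norm{u}_\infty$ and an upgrade of the pointwise convergence at $t=0$ to locally uniform convergence using uniform continuity of $u\circ\Phi_j$ on compact pieces of the $I_j$. The paper's proof is the same argument, merely specialised to the representative structure $\oplus|\odot|\ominus$ and leaving the forward implication implicit in the ``if and only if'' of Lemma~\ref{lem:cont}.
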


\begin{proof}
By Lemma \ref{lem:cont} we obtain that for every $t\geq 0$ and $u\in
C_b(\bbr)$ the function $x\mapsto T_tu(x)$ is continuous. Since
$T_tu(x)= u(\Phi_j(t+\Phi_j^{-1}(x)))$ on the $\oplus$- and
$\ominus$-domains and $T_tu(x)=u(x)$ on $J_\odot$, the function is
bounded. It remains to show locally uniform convergence at zero: We
only consider the most difficult structure $\oplus|\odot|\ominus$.
Let $\varepsilon>0$. Let $[a,b]:=J_\odot$. On the interval
$[a-\varepsilon,b+\varepsilon]$ we have $|X_t^x-x|\leq \varepsilon$
since $\Phi_\oplus$ (resp. $\Phi_\ominus$) is strictly monotonically
increasing (resp. decreasing). Furthermore $u$ is uniformly
continuous on this interval. For $x\notin
[a-\varepsilon,b+\varepsilon]$ we argue as follows: w.l.o.g. we
consider $]-\infty,a-\varepsilon]\subseteq J_\oplus$. Choose $t_0$
such that for $0\leq t \leq t_0$ and $x\in ]-\infty,a-\varepsilon]$
we have
\[
X_t^x\in \left]-\infty,a-\frac{\varepsilon}{2}\right].
\]
Let $w<a-\varepsilon$. Then $\Phi_\oplus^{-1}([w,a-\varepsilon])+[0,t_0]$ is a compact set on which the function $u\circ \Phi_\oplus$ is uniformly continuous. Since $w$ was chosen arbitrarily, the result follows.
\end{proof}

We close this section by dealing with the generator of the process which is defined as follows:

\begin{definition} \label{def:generator}
The \emph{generator} $A$ of the semigroup $(T_t)_{t\geq 0}$ is the linear mapping $A:D(A) \to B_b(\bbr^d)$:
\[
A u := \lim_{t \downarrow 0} \frac{T_t u - u}{t} \hspace{1cm} (u \in D (A))
\]
where
\[
D(A):=\left\{u\in B_b(\bbr^d) : \lim_{t \downarrow 0} \frac{T_t u - u}{t} \text{ exists in } \norm{\cdot}_\infty\right\}
\]
is the \emph{domain} of the operator.
\end{definition}

A Hunt process is called \emph{rich}, if the test functions $C_c^\infty(\bbr)$ are contained in $D(A)$ and $A(C_c^\infty(\bbr))\subseteq C_\infty(\bbr)$. A classical result which is due to P. Courr\`ege (cf. \cite{courrege}) tells us that if $X$ is a rich Feller process, the generator is a pseudo-differential operator which can be written as
\[
Au(x)=\int_{\bbr^d} e^{ix\xi} p(x,\xi) \widehat{u}(\xi) \, d\xi \hspace{10mm} (u\in C_c^\infty(\bbr))
\]
where $\widehat{u}(\xi)=\frac{1}{(2 \pi)}\int_{\bbr} e^{-ix \xi} u(x) \, dx$ denotes the Fourier transform and
\[
p(x,\xi)= -i \ell(x)  \xi + \frac{1}{2} Q(x) \xi^2
    - \int_{y\neq 0} \Big(e^{i y \xi } -1 - i y \xi \cdot \chi(y)\Big) \, N(x,dy)
\]
is for every fixed $x\in\bbr$ a continuous negative definite function in the sense of Schoenberg (cf. \cite{bergforst} Chapter II). $p:\bbr\times\bbr\to\bbc$ is called the \emph{symbol} of the process. In \cite{mydiss} we have shown that the symbol can be calculated by the formula
\begin{align} \label{myformula}
  p(x,\xi)=- \lim_{t\downarrow 0}\bbe^x \frac{e^{i(X^\sigma_t-x)\xi}-1}{t}
\end{align}
where $\sigma$ is the first-exit-time of an arbitrary compact neighborhood of $x$. This formula allows to generalize the notion of the symbol from Feller to It\^o processes.

\begin{proposition} \label{prop:rich}
Let $X$ be a Feller process of type $\oplus$ or $\ominus$. $X$ is rich if and only if the generating path $\Phi$ is continuously differentiable.
\end{proposition}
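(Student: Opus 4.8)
The plan is to identify the generator of a pure-type Feller process as a first-order differential operator and then read off that richness is precisely the regularity that makes its coefficient well-defined and continuous. First I would pin down the state space. Since $X$ is Feller and of pure type, Theorem~\ref{thm:feller} forces the left endpoint of $I$ to be $-\infty$, and the absence of killing (Remark~\ref{rem:important}~e)) forces the right endpoint to be $\infty$; hence $I=\bbr$ and $\Phi\colon\bbr\to\bbr$ is a strictly monotone continuous bijection with $X_t^x=\Phi(t+\Phi^{-1}(x))$ defined for all $t\geq 0$, $x\in\bbr$. Writing $s=\Phi^{-1}(x)$, every difference quotient becomes
\[
\frac{T_t u(x)-u(x)}{t}=\frac{(u\circ\Phi)(s+t)-(u\circ\Phi)(s)}{t},
\]
so the analysis reduces to studying the increments of $g:=u\circ\Phi$ along a single time shift.

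For the implication $\Phi\in C^1\Rightarrow$ rich, take $u\in C_c^\infty(\bbr)$ and set $g:=u\circ\Phi$. Then $g\in C^1(\bbr)$ with $g'(s)=u'(\Phi(s))\Phi'(s)$, and since $\operatorname{supp} g'\subseteq\Phi^{-1}(\operatorname{supp} u')$ is contained in a compact set (continuity of $\Phi^{-1}$), the function $g'$ is uniformly continuous. Using $\frac{g(s+t)-g(s)}{t}=\int_0^1 g'(s+\theta t)\,d\theta$ I would bound the error against $g'(s)$ by the modulus of continuity of $g'$, which is uniform in $s$ and vanishes as $t\downarrow 0$; because $x\mapsto s$ is a bijection this gives convergence in $\norm{\cdot}_\infty$. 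Hence $u\in D(A)$ and $Au(x)=\ell(x)u'(x)$ with $\ell(x)=\Phi'(\Phi^{-1}(x))$. As $\ell$ is continuous and $u'$ is compactly supported, $Au$ is continuous with compact support, so $Au\in C_\infty(\bbr)$, and $X$ is rich.

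For the converse I would recover $\Phi$ from the generator by localization. Fix $x_0$ and choose $u\in C_c^\infty(\bbr)$ with $u(y)=y$ on a neighborhood $U$ of $x_0$. For $x$ near $x_0$ and $t$ small, both $x$ and $X_t^x=\Phi(t+s)$ lie in $U$, so the difference quotient equals $\frac{\Phi(s+t)-\Phi(s)}{t}$. Richness gives $u\in D(A)$, so the limit exists; therefore the right derivative $\Phi'_+(s)$ exists for every $s$, and locally $\Phi'_+(s)=Au(\Phi(s))$. Richness further gives $Au\in C_\infty(\bbr)\subseteq C(\bbr)$, and composing with the continuous $\Phi$ shows that $s\mapsto\Phi'_+(s)$ is continuous. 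I would then invoke the elementary fact that a continuous function with everywhere-existing, continuous right derivative is $C^1$: comparing $\Phi$ with $s\mapsto\Phi(s_0)+\int_{s_0}^{s}\Phi'_+(r)\,dr$ (which is $C^1$ by continuity of $\Phi'_+$), the difference has vanishing right derivative everywhere and is hence constant, so $\Phi\in C^1$ with $\Phi'=\Phi'_+$.

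The two steps needing genuine care are the uniformity of the limit in the first direction and the real-analysis implication in the second. In the first direction mere pointwise differentiability of $\Phi$ would not suffice; it is the compact support of $u$, and thus of $(u\circ\Phi)'$, that upgrades pointwise to uniform convergence and places $C_c^\infty(\bbr)$ inside $D(A)$. In the second direction the passage from ``continuous right derivative everywhere'' to ``$C^1$'' is the one substantive analytic input and should be justified via the constancy of continuous functions with vanishing right Dini derivative rather than assumed.
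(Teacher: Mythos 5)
Your proof is correct and follows essentially the same route as the paper's: compute $Au(x)=u'(x)\,\Phi'_+(\Phi^{-1}(x))$ and observe that richness is equivalent to the existence and continuity of the right derivative $\Phi'_+$, which by the standard real-analysis fact is equivalent to $\Phi\in C^1$. You are in fact more careful than the paper on two points it leaves implicit --- the sup-norm (rather than merely pointwise) convergence required for membership in $D(A)$, and the localization with $u=\mathrm{id}$ near $x_0$ needed to recover $\Phi'_+$ from $Au$ --- but these are refinements of the same argument rather than a different one.
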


\begin{proof}
Let $u\in C_c^\infty(\bbr)$.
If $t\mapsto X_t^x$ is differentiable from the right in zero we have
\[
Au(x)= \lim_{t\downarrow 0}\frac{u(X_t^x)-u(X_0^x)}{t}=\left.\frac{\partial}{\partial t}\right|_{t=0}^+ (u\circ X_t^x),
\]
if not, the limit does not exist and hence the test functions are not contained in $D(A)$. By the representation using the generating path $\Phi$ we obtain
\begin{align*}
  Au(x)&=\left. u'(\Phi(t+\Phi^{-1}(x)))\cdot \Phi'(t+\Phi^{-1}(x))\cdot 1 \right|_{t=0}\\
       &= u'(x) \cdot \Phi'(\Phi^{-1}(x))
\end{align*}
where $\cdot'$ denotes the right hand side derivative which coincides with the derivative if it is continuous. We already know that $\Phi$, $\Phi^{-1}$ and $u'$ are continuous. Therefore, in order to obtain continuity of $x\mapsto Au(x)$ it is necessary and sufficient that $\Phi'$ is continuous. It is a well-known fact that continuity of the right-hand side derivative implies that the function is continuously differentiable. Since the support of $u'$ is compact, $x\mapsto Au(x)$ is automatically vanishing at infinity.
\end{proof}

\begin{remark}
If we are not in a `pure type' case as in the proposition we need to claim that every $\Phi_j$ is differentiable and furthermore there has to be a smooth transition from every $J_j$ to the neighboring intervals (cf. Example \ref{ex:middlederivativezero}).
\end{remark}

\begin{theorem}
Let $X$ be a deterministic rich Feller process. In a $\odot$-domain the symbol $p(x,\xi)$ is zero. In a $\oplus$- or $\ominus$-domain the symbol is $i\xi \Phi_j'(\Phi_j^{-1}(x))$.
\end{theorem}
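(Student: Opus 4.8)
The plan is to read the symbol straight off formula \eqref{myformula}, using that for a deterministic process the expectation $\bbe^x$ is vacuous, so the quantity under the limit is an explicit elementary function of $t$. I would first dispose of the $\odot$-domain: there the path is constant, $X_t^x=x$ for all $t\geq 0$, so for every $\xi$ and every $t>0$ the difference quotient in \eqref{myformula} equals $(e^{i\cdot 0\cdot\xi}-1)/t=0$, whence $p(x,\xi)=0$. I would also note at the outset that the first-exit-time $\sigma$ of a compact neighbourhood of $x$ is irrelevant to the limit: since the paths of a deterministic Hunt process are continuous and $x$ lies in the interior, one has $\sigma>0$, so $X_t^\sigma=X_t^x$ for all sufficiently small $t$ and the truncation disappears as $t\downarrow 0$.

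For an interior point $x$ of a $\oplus$- or $\ominus$-domain I would invoke the structural representation of Theorem \ref{thm:structure}, $X_t^x=\Phi_j(t+\Phi_j^{-1}(x))$, together with richness: by Proposition \ref{prop:rich} the generating path $\Phi_j$ is continuously differentiable, so that the right derivative of $t\mapsto X_t^x$ at $0$ exists and, by the chain rule, equals $\Phi_j'(\Phi_j^{-1}(x))$. Writing $g(t):=X_t^x-x$ with $g(0)=0$ and expanding $e^{ig(t)\xi}-1=i\xi\,g(t)+o(|g(t)|)$, I obtain
\[
\frac{e^{i(X_t^x-x)\xi}-1}{t}=i\xi\,\frac{g(t)}{t}+\frac{o(|g(t)|)}{g(t)}\cdot\frac{g(t)}{t}\xrightarrow[t\downarrow 0]{}i\xi\,\Phi_j'\big(\Phi_j^{-1}(x)\big),
\]
where the remainder is controlled because strict monotonicity of $\Phi_j$ forces $g(t)\neq 0$ for small $t>0$. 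Feeding this into \eqref{myformula} identifies the symbol on the domain with the asserted value. Equivalently --- and this is the sign-robust route I would prefer to present --- the generator computed in Proposition \ref{prop:rich} is the first-order operator $Au(x)=u'(x)\,\Phi_j'(\Phi_j^{-1}(x))$; since $u'(x)=\int_{\bbr}i\xi\,e^{ix\xi}\widehat u(\xi)\,d\xi$, comparison with $Au(x)=\int_{\bbr}e^{ix\xi}p(x,\xi)\widehat u(\xi)\,d\xi$ yields at once $p(x,\xi)=i\xi\,\Phi_j'(\Phi_j^{-1}(x))$.

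I expect the substantive point to be conceptual rather than computational, namely that no Gaussian or jump contribution can appear, so that the symbol is purely the drift term linear in $\xi$. This is exactly what Theorem \ref{thm:types} and Corollary \ref{cor:huntsemimg} provide: a deterministic Hunt path is continuous and of finite variation, hence carries no quadratic-variation (diffusion) part and no jumps, so the $Q$- and $N$-contributions in the representation vanish and the entire symbol reduces to its drift term. The remaining work is pure bookkeeping --- justifying that $\sigma$ may be dropped for small $t$ and that the remainder $o(|g(t)|)$ vanishes after division by $t$ --- both of which follow from continuity of the paths and strict monotonicity of $\Phi_j$. The case distinction between $\oplus$ and $\ominus$ needs no separate treatment, since only the sign of $\Phi_j'$ differs and the formula $i\xi\,\Phi_j'(\Phi_j^{-1}(x))$ absorbs it.
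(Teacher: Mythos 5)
Your proposal is correct and follows essentially the same route as the paper, whose proof is the one-line remark that the result follows directly from \eqref{myformula}; you simply make explicit the vacuousness of $\bbe^x$, the irrelevance of $\sigma$ for small $t$, and the Taylor expansion giving $\lim_{t\downarrow 0}(e^{i(X_t^x-x)\xi}-1)/t=i\xi\,\Phi_j'(\Phi_j^{-1}(x))$. One small caveat: \eqref{myformula} carries a leading minus sign, so feeding your limit into it literally yields $-i\xi\,\Phi_j'(\Phi_j^{-1}(x))$ rather than the stated $i\xi\,\Phi_j'(\Phi_j^{-1}(x))$ --- a sign discrepancy already present between the theorem and the paper's own conventions (compare the drift symbols $-ib(x)\xi$ in Examples \ref{ex:knickprozess} and \ref{ex:spacedepdrift}), so it is not a defect of your argument.
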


\begin{proof}
Follows directly from \eqref{myformula}.
\end{proof}

\section{The Cantor Process}

Now we use our previous results in order to prove the existence of a Hunt semimartingale which is not an It\^o process. The only known example of such a process is the absolute value of a Brownian motion. Compare in this context \cite{cinlarjacod81} Example (3.58). Our process has the advantage of being defined on the whole real axis.

Let $C$ be the Cantor set and $h:\bbr\to [0,1]$ be the Cantor function (cf. \cite{cantorfunction} and \cite{elstrodt} Section 8.4) and define $g:[0,1]\to [0,1]$ by $g(y):= (1/2)(h(y)+y)$.

By the well known results on $h$ we obtain the following properties of $g$:
\begin{itemize}
  \item $g$ is strictly monotonically increasing.
  \item $g(0)=0$ and $g(1)=1$
  \item It is continuous and bijective.
  \item It is differentiable in $[0,1]\backslash C$ and the derivative in these points is $1/2$.
\end{itemize}

\begin{definition}
For $x\in\bbr$ let $\Phi(x):=g(x-[x])+[x]$ with $g$ defined above and $x\mapsto[x]$ denoting the floor funcion. The stochastic process $X=(X_t)_{t\geq 0}$ defined by
\[
  X_t^x:=\Phi(t+\Phi^{-1}(x))
\]
is called \emph{Cantor process}.
\end{definition}

Since $\Phi:\bbr\to\bbr$ is strictly monotonically increasing, continuous and bijective, $X$ is a Hunt process (Theorem \ref{thm:structure}) and by Corollary \ref{cor:huntsemimg} it is a semimartingale.  Being defined by a single increasing $\Phi$ its structure is $\oplus$, in particular it is of `pure type'. $X$ is a Feller process by Theorem \ref{thm:feller}, because $\Phi$ is defined on the real line.

\begin{proposition}
The Cantor process $X$ is not an It\^o process.
\end{proposition}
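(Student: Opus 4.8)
The plan is to argue by contradiction: suppose the Cantor process $X$ were an It\^o process. Being deterministic and of pure type $\oplus$ with a single strictly increasing generating path $\Phi$, its paths $t\mapsto X_t^x=\Phi(t+\Phi^{-1}(x))$ are continuous and of finite variation, so the only non-trivial characteristic can be the first one, $B_t^x$; the continuous martingale part $C$ and the jump measure $\nu$ must vanish. Hence if $X$ is an It\^o process there must exist a measurable function $\ell:\bbr\to\bbr$ with
\[
  B_t^x=\int_0^t \ell(X_s^x)\,ds,
\]
and since $X_t^x=\Phi(t+\Phi^{-1}(x))$ is the (absolutely continuous part of the) first characteristic for the deterministic semimartingale, this forces $\Phi(t+\Phi^{-1}(x))-x=\int_0^t \ell\big(\Phi(s+\Phi^{-1}(x))\big)\,ds$ for all $x$ and $t$.

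The key step is then to extract from this integral equation a pointwise differentiability statement about $\Phi$. Fixing a starting point and differentiating in $t$ (at points where the right-hand integrand is continuous, or via the Lebesgue differentiation theorem applied to the integral) I would obtain that $\Phi$ is differentiable at $\Phi^{-1}(z)$ with derivative $\ell(z)$ for almost every point along each path; more precisely, the existence of an It\^o drift coefficient $\ell$ would force the right-hand derivative $\tfrac{d^+}{dt}\Phi(t+\Phi^{-1}(x))$ to exist and equal $\ell(X_t^x)$ for all $t$, hence $\Phi$ must be differentiable everywhere with $\Phi'(u)=\ell(\Phi(u))$, i.e. $\Phi'$ exists at every point and depends only on the value $\Phi(u)$, not on $u$ itself. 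This reduces everything to a property of the generating path $\Phi(x)=g(x-[x])+[x]$ built from the Cantor function.

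The contradiction comes from the structure of $g$. On $[0,1]\setminus C$ the function $g$ is differentiable with $g'=1/2$, but on the Cantor set $C$ (which has positive $g$-measure, since $g$ maps $C$ onto a set of measure $1/2$) the Cantor function $h$ is responsible for all the increase, and $g$ fails to be differentiable at the points of $C$ in the required sense. Concretely I would exhibit a point $c\in C$ where the difference quotients of $g$ blow up (the classical fact that $h$, and hence $g$, has infinite right-hand derivative at suitable points of $C$, or simply no finite derivative on a set of positive measure). Then $\Phi$ is not differentiable at the corresponding point $u$ with $\Phi(u)=z$, contradicting the requirement that $\Phi'(u)=\ell(z)$ exist.

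The main obstacle I expect is the middle step: rigorously deducing from ``$X$ has an It\^o drift $\ell$'' that $\Phi$ must be genuinely differentiable at \emph{every} point (not just almost everywhere), so that the failure of differentiability of $g$ on the Cantor set really obstructs the It\^o representation. The subtlety is that the semimartingale characteristic $B$ is an a.e.-defined object, so one must argue that the specific functional form $\nu(\omega;ds,dy)=N(X_s,dy)\,ds$, $C_t=\int_0^t Q(X_s)\,ds$, $B_t=\int_0^t \ell(X_s)\,ds$ with coefficients depending on $X_s$ alone, together with time homogeneity \eqref{timehom}, upgrades the a.e. statement to a pointwise one along each path and hence everywhere on $\bbr$. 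Once this upgrade is in place, invoking the non-differentiability of the Cantor function on $C$ finishes the proof.
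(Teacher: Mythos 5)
Your setup is the same as the paper's: the continuity and finite variation of the paths force $C=\nu=0$, so the It\^o property reduces to the existence of a measurable $\ell$ with $B_t=\int_0^t \ell(X_s)\,ds$, and along the path started at $0$ this reads $g(t)=\int_0^t\ell(X_s)\,ds$ on $[0,1]$. But the step you yourself flag as the ``main obstacle'' is a genuine gap, and it cannot be closed in the form you propose. An indefinite Lebesgue integral $t\mapsto\int_0^t f(s)\,ds$ need \emph{not} be differentiable at every point (take $f=1_{[0,1/2]}$), so the It\^o representation does not force $\Phi$ to be differentiable everywhere with $\Phi'(u)=\ell(\Phi(u))$; it only gives differentiability at Lebesgue-a.e.\ $t$. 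Since the Cantor set $C$ is Lebesgue-null, the failure of differentiability of $g$ on $C$ is perfectly consistent with that a.e.\ statement --- indeed $g$ \emph{is} differentiable off $C$ with $g'=1/2$ --- so exhibiting a point of $C$ where the difference quotients of $g$ blow up produces no contradiction. The ``positive $g$-measure of $C$'' does not help either, because the relevant null sets live on the time axis, not in the image.

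The correct finishing move, which is the paper's, is about absolute continuity rather than pointwise differentiability: $t\mapsto\int_0^t\ell(X_s)\,ds$ is absolutely continuous, whereas $g(t)=\tfrac12\bigl(h(t)+t\bigr)$ is not, because the Cantor function $h$ is singular continuous and admits no Lebesgue density. Equivalently, from $\ell(X_t)=g'(t)=1/2$ for a.e.\ $t$ one would get $g(1)-g(0)=\int_0^1\tfrac12\,ds=\tfrac12$, contradicting $g(1)-g(0)=1$; the discrepancy is exactly the singular part carried by $h$ on $C$. So your first and last paragraphs point at the right object, but the middle of your argument replaces a one-line absolute-continuity observation with a pointwise-differentiability claim that is false as stated, and the proof does not go through without replacing that step.
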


\begin{proof}
Let $(B,C,\nu)$ denote the semimartingale characteristics of $X$. $C$ and $\nu$ are zero since the paths of the process are continuous and of finite variation on compacts. If $X$ was an It\^o process, there would exist a measurable $\ell:\bbr\to\bbr$ such that
\[
X_t^0= B_t=\int_0^t \ell(X_s) \, ds.
\]
Restricted to $t\in[0,1]$ this means
\[
 \int_0^t  \ell(X_s) \, ds = g(t) = \frac{1}{2} (h(t) + t) = \frac{1}{2}h(t) + \int_0^t \frac{1}{2} \, ds
\]
and therefore $h$ has the Lebesgue density $2 \ell(X_s)-1$. This contradicts the well known fact that the Cantor function does not admit a Lebesgue density.
\end{proof}

\begin{corollary}
The Cantor process is not rich.
\end{corollary}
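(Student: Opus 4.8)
The plan is to read this off as a contrapositive of a structural inclusion already recorded in the introduction, namely that every rich Feller process is an It\^o process. The Cantor process has just been shown to be Feller (via Theorem \ref{thm:feller}, since its single generating path $\Phi$ is defined on all of $\bbr$) but \emph{not} an It\^o process (the preceding Proposition). Hence it cannot be rich, for otherwise it would be a rich Feller process and therefore It\^o, contradicting what was just proved. This is the shortest route and uses nothing beyond results in hand: the inclusion (rich) Feller $\subset$ It\^o together with the failure of the It\^o property for the Cantor process.

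If instead one wants an argument internal to this paper's characterizations, I would invoke Proposition \ref{prop:rich} directly. The Cantor process is of pure type $\oplus$, so by that proposition richness is equivalent to the generating path $\Phi$ being continuously differentiable. I would then rule this out using the listed properties of $g$. On $[0,1]\setminus C$ the function $g$, and hence $\Phi$ on $(0,1)\setminus C$, is differentiable with derivative identically $1/2$; and $(0,1)\setminus C$ is dense in $[0,1]$ because the Cantor set $C$ is nowhere dense. Thus if $\Phi'$ existed and were continuous on $[0,1]$, it would be forced to equal $1/2$ everywhere. The fundamental theorem of calculus would then give $\Phi(1)-\Phi(0)=\int_0^1 \Phi'(x)\,dx = 1/2$, in conflict with $\Phi(1)-\Phi(0)=g(1)-g(0)=1$. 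So $\Phi\notin C^1$ and the process is not rich.

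Since the statement is genuinely a corollary, there is no serious obstacle; the only point to keep honest is that the characterization in Proposition \ref{prop:rich} detects \emph{continuous} differentiability, not mere pointwise existence of $\Phi'$, so the failure has to be exhibited at that level. Both routes handle this cleanly: the first bypasses it entirely through the It\^o property, while the second localizes the failure to the incompatibility between the constant slope $1/2$ imposed on the dense complement of $C$ and the total increment $1$ of $\Phi$ across $[0,1]$. I would present the first argument as the proof and perhaps remark that the second gives an independent verification.
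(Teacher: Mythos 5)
Your first argument is exactly the paper's proof: it deduces non-richness from the inclusion rich Feller $\subset$ It\^o (Theorem 3.10 of the cited dissertation) combined with the preceding proposition that the Cantor process is not an It\^o process. Your alternative route via Proposition \ref{prop:rich} --- showing $\Phi\notin C^1$ because a continuous derivative would have to equal $1/2$ on the dense set $[0,1]\setminus C$ and hence everywhere, contradicting $\Phi(1)-\Phi(0)=1$ by the fundamental theorem of calculus --- is also correct and gives a self-contained verification internal to this paper, but the paper itself takes the first route.
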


\begin{proof}
We have already seen that $X$ is a Feller process. If it was rich, we would obtain by Theorem 3.10 of \cite{mydiss} that it is an It\^o process. This is a contradiction to the proposition above.
\end{proof}

\begin{remark}
By Theorem 3.35 of \cite{cinlarjacod81} every Hunt process can be written as a random time change of an It\^o process. In our case this (random) time change can be chosen to be $A:t\mapsto t+\Phi^{-1}(x)$ which leads to $Y^x_{A(u)} = X_u^x$ for $u\geq 0$ and $Y_t^x=x+t$, i.e. the It\^o process from which $X$ is obtained is in this case just a deterministic L\'evy process.
\end{remark}

\section{Further Examples}

\begin{example}\label{ex:knickprozess}
With the notation of the previous sections we define $\Phi(y):=\Phi_0(y):= (1/2)y\cdot 1_{]-\infty,0[}(y)+y\cdot 1_{[0,\infty[}(y)$. Obviously we have $I_0=J_0=\bbr$ and $\Phi^{-1}(z)= 2z\cdot 1_{]-\infty,0[}(z)+z\cdot 1_{[0,\infty[}(z)$. The corresponding Markov process is given by
\[
X_t^x:=\Phi\Big(t+\Phi^{-1}(x)\Big)
\]
The process is Feller by Theorem \ref{thm:feller}, but not rich (cf. Proposition \ref{prop:rich}). Nevertheless it is an It\^o process with first characteristic
\[
B_t^x=X_t^x-x=\int_0^t b(X_t^x) \, ds
\]
where $b(y)= (1/2)\cdot 1_{]-\infty,0[}(y)+ \cdot 1_{[0,\infty[}(y)$. The symbol of this process is
\[
p(x,\xi)=-ib(x)\xi
\]
with the same function $b$.
\end{example}

\begin{example}\label{ex:spacedepdrift}
Consider the transition semigroup $(P_t)_{t\geq 0}$ of the \emph{space dependent drift}
\[
P_t(x,B)=\begin{cases}
  1_{B-t}(x) & \text{if } x>0 \\
  1_{B}(x)   & \text{if } x=0 \\
  1_{B+t}(x) & \text{if } x<0,
  \end{cases}
\]
where $t\geq0$, $x\in\bbr$ and $B\in\cb$. The corresponding process is not Feller, but it is an It\^o process (cf. \cite{mydiss} Example B.6). The semimartingale characteristics are $(B,C,\nu)=(X^x-x, 0,0)$ and the symbol of the process is
\[
p(x,\xi)= -i \cdot \text{sign}(x) \xi.
\]
This symbol is not continuous in $x$, but it is finely continuous (cf. \cite{blumenthalget} Section II.4), since $t\pfe\ell(X_t)$ is right continuous for every $\bbp^x \ (x\in\bbr)$.
\end{example}

\begin{example} \label{ex:partlycantor}
Taking the $\Phi$ of the Cantor process (Section 4) as generating path on $]0,\infty]$ and $x\mapsto-x$ on $]-\infty,0]$ we obtain a process which is neither Feller nor nice nor It\^o, but still a Hunt semimartingale.
\end{example}

\begin{example} \label{ex:middlederivativezero}
Let $\Phi_\oplus:[0,\infty[\to[1,\infty[$ be a strictly increasing function which is continuously differentiable with right hand side derivative 0 at zero. Let $\Phi_\ominus:[0,\infty[\to]-\infty,-1]$ be a strictly decreasing function which is continuously differentiable with right hand side derivative 0 at zero. And let $]-1,1[$ be a $\odot$-domain. The process given by this structure is not Feller, but it is a rich It\^o process:
\begin{center}
\includegraphics[width=30mm, angle=-90]{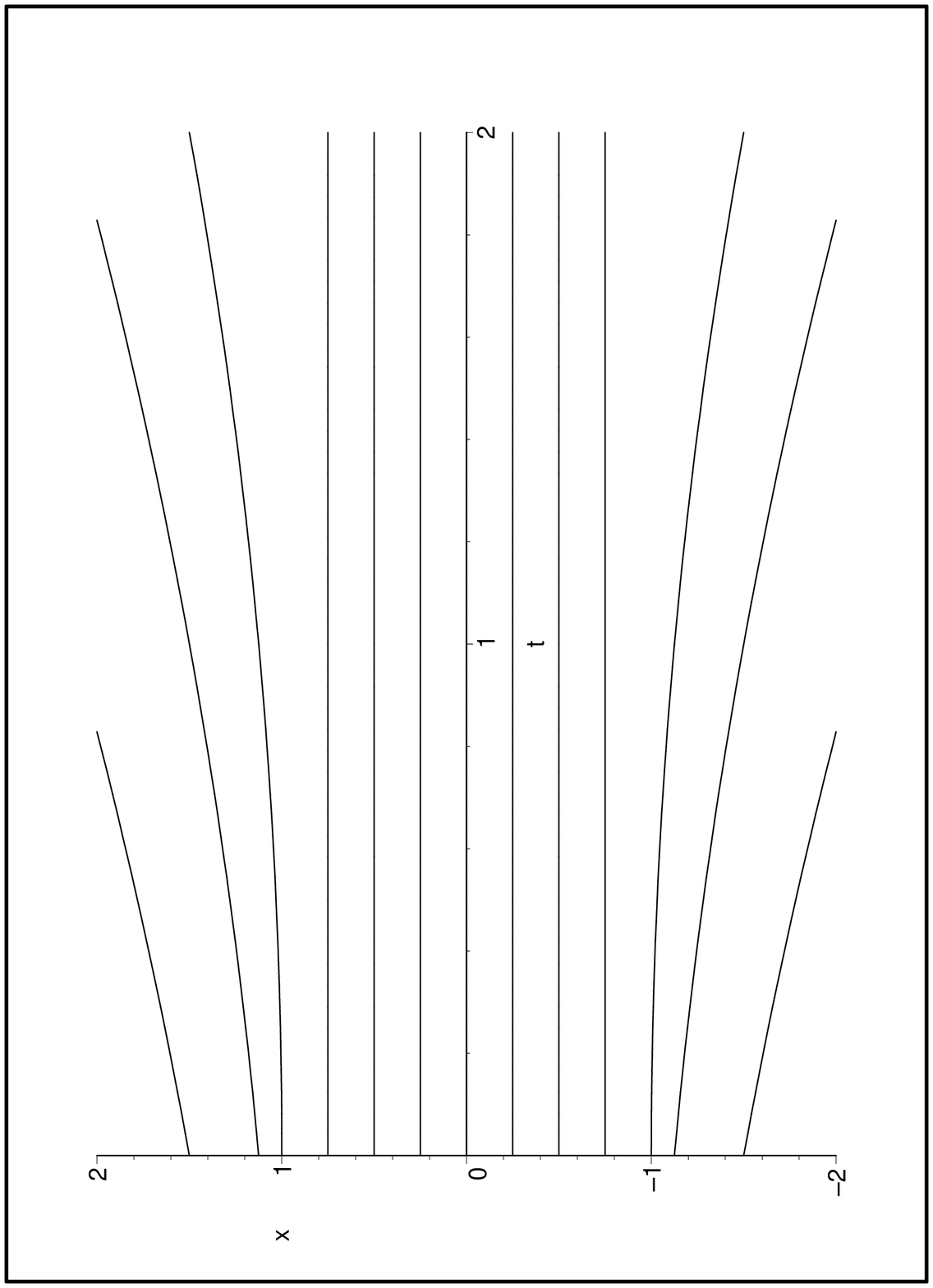}
\end{center}
\end{example}

\begin{example} \label{ex:cadlagmp}
Consider the following path starting in $-1$:
\[
X_t^{(-1)}=\begin{cases} -1+x & \text{if } t\in A\\
                        1-x & \text{if } t\in B\\
                        0   & \text{if } t>1 \end{cases}
\]
where
\begin{align*}
A&=\bigcup_{n=0}^\infty \bigcup_{k=0}^{2^n-1} \left[ \frac{2^n-1}{2^n}+(2k)\frac{1}{4^{n+1}},
                                                     \frac{2^n-1}{2^n}+(2k+1)\frac{1}{4^{n+1}}\right[\\
B&=\bigcup_{n=0}^\infty \bigcup_{k=0}^{2^n-1} \left[ \frac{2^n-1}{2^n}+(2k+1)\frac{1}{4^{n+1}},
                                                     \frac{2^n-1}{2^n}+(2k+2)\frac{1}{4^{n+1}}\right[.
\end{align*}
Since the definition is rather involved, we plot the following diagram for the readers convenience:

\centerline{
\setlength{\unitlength}{2cm}
\begin{picture}(1,2)
\put(0,0){\vector(0,1){2}}
\put(0,1){\vector(1,0){2.1}}
\linethickness{0.5mm}
\put(0,0){\line(1,1){0.25}}
\put(0.25,1.75){\line(1,-1){0.25}}
\put(0.5,0.5){\line(1,1){0.0625}}
\put(0.625,0.625){\line(1,1){0.0625}}
\put(0.5625,1.4375){\line(1,-1){0.0625}}
\put(0.6875,1.3125){\line(1,-1){0.0625}}
\put(0.75,0.75){\line(1,1){0.015625}}
\put(0.78125,0.78125){\line(1,1){0.015625}}
\put(0.8125,0.8125){\line(1,1){0.015625}}
\put(0.84375,0.84375){\line(1,1){0.015625}}
\put(0.765625,1.234375){\line(1,-1){0.015625}}
\put(0.796875,1.203125){\line(1,-1){0.015625}}
\put(0.828125,1.171875){\line(1,-1){0.015625}}
\put(0.859375,1.140625){\line(1,-1){0.015625}}
\put(1,0.8){1}
\put(2,0.8){2}
\put(-0.2,1.8){1}
\put(-0.2,0){1}
\end{picture}
}
For every $n\in\bbn$ the path crosses the interval $[-1/2^n,1/2^n]$ at least $2^{n-1}$ times. Therefore it can not be of finite variation. For the other starting points we define the process as follows: if there exists a $t_x\geq 0$ such that $X_{t_x}^{(-1)} = x$,
we set $X^x_t:=X_{t_x+t}^{(-1)}$. If there is no such $t_x$ we set $X^x_t:=x$ for every $t\geq 0$. By Proposition \ref{prop:detsemimg} the process is not a semimartingale.
\end{example}

\begin{example} \label{ex:multidim}
Next we show that not every deterministic Hunt process is a semimartingale if the dimension of the state-space is bigger than one: let $\Phi:[0,\infty[\to\bbr^2$ be given by
\[
\Phi(t)=\binom{f(t)}{t}
\]
where $f(t)$ is a function of infinite variation on compacts. Furthermore, if there exists a $t_x\geq 0$ such that
\[
\Phi(t_x)=\binom{x_1}{x_2} =: x
\]
we set $X^x_t:=\Phi(t_x+t)$. If there is no such $t_x$ we set $X^x_t:=x$ for every $t\geq 0$. Obviously this process is Hunt. But it is not a semimartingale since for the starting point $(0,0)$ the path is not of finite variation on compacts (cf. Proposition \ref{prop:detsemimg}).
\end{example}

\textbf{Acknowledgements:} I would like to thank my colleague
Bj\"orn B\"ottcher for his helpful comments and suggestions.



\begin{thebibliography}{10}

\bibitem{bar-lev01}
Barndorff-Nielsen, O.~E. and Levendorskii, S.
\newblock Feller processes of normal inverse {G}aussian type.
\newblock {\em Quantitative Finance}, \textbf{1} (2001):\ 318--331.

\bibitem{lp}
Barndorff-Nielsen, O.~E., Mikosch, T. and Resnick, S.~I., editors.
\newblock {\em L{\'e}vy Process-Theory and Applications}.
\newblock Birkh\"auser, Boston 2001.

\bibitem{bauerwt}
Bauer, H.
\newblock {\em Wahrscheinlichkeitstheorie}.
\newblock de Gruyter, Berlin 2002.

\bibitem{bergforst}
Berg, C. and Forst, G.
\newblock {\em Potential {T}heory on {L}ocally {C}ompact {A}belian {G}roups}.
\newblock Springer, Berlin 1975.

\bibitem{blumenthalget}
Blumenthal, R.~M. and Getoor, R.~K.
\newblock {\em Markov {P}rocesses and {P}otential {T}heory}.
\newblock Academic Press, New York 1968.

\bibitem{cinlarjacod81}
Cinlar, E. and Jacod, J.
\newblock Representation of {S}emimartingale {M}arkov {P}rocesses in {T}erms of
  {W}iener {P}rocesses and {P}oisson {R}andom {M}easures.
\newblock {\em Seminar on Stochastic Processes}, p. 159--242, 1981.

\bibitem{vierleute}
Cinlar, E., Jacod, J., Protter, P. and Sharpe, M.~J.
\newblock Semimartingales and {M}arkov {P}rocesses.
\newblock {\em Z. Wahrscheinlichkeitstheorie verw. Gebiete}, \textbf{54}
  (1980):\ 161--219.

\bibitem{courrege}
Courr\`{e}ge, P.
\newblock Sur la forme int\'egro-diff\'erentielle des op\'erateurs de
  ${C}_k^\infty$ dans ${C}$ satisfaisant au principe du maximum.
\newblock {\em S\'em. Th\'eorie du potentiel}.
\newblock 1965/66, Expos\'e 2, 38pp.

\bibitem{cantorfunction}
Dovgoshey, O., Martio, O., Ryazanov, V., and Vuorinen, M.
\newblock The {C}antor function.
\newblock {\em Expo. Math.}, \textbf{24} (2006):\ 1--37.

\bibitem{elstrodt}
Elstrodt, J.
\newblock {\em Ma\ss- und {I}ntegrationstheorie}.
\newblock Springer, Berlin 2002.

\bibitem{ethierkurtz}
Ethier, S.~N. and Kurtz, T.~G.
\newblock {\em Markov {P}rocesses - {C}haracterization and {C}onvergence}.
\newblock Wiley, New York 1986.

\bibitem{niels3}
Jacob, N.
\newblock {\em Pseudo-{D}ifferential {O}perators and {M}arkov {P}rocesses III.
  {M}arkov {P}rocesses and {A}pplications}.
\newblock Imperial College Press, London 2005.

\bibitem{jacodshir}
Jacod, J. and Shiryaev, A.
\newblock {\em Limit {T}heorems for {S}tochastic {P}rocesses}.
\newblock Springer, Berlin 1987.

\bibitem{revuzyor}
Revuz, D. and Yor, M.
\newblock {\em Continuous {M}artingales and {B}rownian {M}otion}.
\newblock Third edition, Springer, Berlin 1999.

\bibitem{sato}
Sato, K.
\newblock {\em L\'evy {P}rocesses and {I}nfinitely {D}ivisible
  {D}istributions}.
\newblock  Cambridge University Press, Cambridge 1999.

\bibitem{schilling98pos}
Schilling, R.~L.
\newblock Conservativeness and {E}xtensions of {F}eller {S}emigroups.
\newblock {\em Positivity}, \textbf{2} (1998):\ 239--256.

\bibitem{SDEsymbol}
Schilling, R.~L. and Schnurr, A.
\newblock The {S}ymbol {A}ssociated with the {S}olution of a {S}tochastic {D}ifferential {E}quation.
\newblock  {\em Electron. J. Probab}, \textbf{15} (2010):\ 1369--1393.

\bibitem{mydiss}
Schnurr, A.
\newblock {\em The {S}ymbol of a {M}arkov {S}emimartingale}.
\newblock PhD thesis, TU Dresden, 2009.

\bibitem{veraart10}
Veraart, A. E.~D.
\newblock Inference for the jump part of quadratic variation of {I}t\^o
  semimartingales.
\newblock {\em Econometric Theory}, \textbf{26} (2010):\ 331--368.

\bibitem{yor}
Yor, M.
\newblock Un example de processus qui n'est pas une semi-martingale.
\newblock {\em Ast\'erisque}, \textbf{52-53} (1978):\ 219--222.

\end{thebibliography}
\end{document}